\newtheorem{theorem}{Theorem}[section]
\newtheorem{lemma}{Lemma}[section]
\newtheorem{proposition}{Proposition}[section]
\newtheorem{prop}[theorem]{Proposition}
\theoremstyle{definition}
\newtheorem{definition}{Definition}
\newtheorem{problem}{Problem}
\numberwithin{equation}{section}
\begin{document}

\title[Rarified $b$-multiplicative sequences]{Counting solutions without zeros or repetitions of a linear congruence and rarefaction in $b$-multiplicative sequences.}

\author{\sc Alexandre AKSENOV}
\address{Alexandre AKSENOV\\
Institut Fourier, UMR 5582\\
100, rue des Maths, BP $74$\\
38402 St Martin d'H\`eres Cedex, France}
\email{alexander1aksenov@gmail.com}

\newcommand{\F}{\mathbb F_p}
\newcommand{\Fx}{\mathbb F_p^\times}
\newcommand{\R}{\mathbb R}
\newcommand{\N}{\mathbb N}
\newcommand{\Z}{\mathbb Z}
\newcommand{\C}{\mathbb C}
\newcommand{\Q}{\mathbb Q}
\newcommand{\pl}{\oplus}
\newcommand{\Minus}{\frac{\phantom{xxx}}{\phantom{xxx}}\,}
\newcommand{\Tr}{\mathop{\rm Tr}}
\newcommand{\Cr}{\circ}
\newcommand{\Kv}{\blacklozenge}
\newcommand{\LINEFOR}[2]{\State\textbf{for }{#1}\textbf{ do }{#2}\textbf{ end for}}

\subjclass[2010]{05 A 10, 05 A 18, 11 B 39, 11 R 18}

\maketitle

\begin{resume}

Pour une suite fortement $b$-multiplicative donn\'ee et un nombre premier  $p$ fix\'e, l'\'etude de la $p$-rar\'efaction consiste \`a caract\'eriser le comportement asymptotique des sommes des premiers termes d'indices multiples de $p$. 
Les valeurs enti\`eres du polyn\^ome 
\flqq norme\frqq\  trivari\'e $\mathcal N_{p,i_1,i_2}(Y_0,Y_1,Y_2)\!:=\!\prod_{j=1}^{p-1}\left(Y_0{+}\zeta_p^{i_1j}Y_1{+}\zeta_p^{i_2j}Y_2\right)\!,$ o\`u 
$i_1,i_2{\in}\{1,2,\dots,p{-}1\}$,  $\zeta_p$ est une racine $p$-i\`eme primitive de l'unit\'e, d\'eterminent ce comportement asymptotique. 
On montre qu'une m\'ethode combinatoire s'applique \`a $\mathcal N_{p,i_1,i_2}(Y_0,Y_1,Y_2)$ qui permet d'\'etablir de nouvelles relations fonctionnelles entre les coefficients de ce polyn\^ome \flqq norme\frqq, diverses propri\'et\'es des coefficients de  $\mathcal N_{p,i_1,i_2}(Y_0,Y_1,Y_2)$, notamment pour $i_1{=}1,i_2{=}2,3;$ cette m\'ethode fournit des relations entre les coefficients binomiaux, de nouvelles preuves des deux identit\'es $\prod_{j=1}^{p-1}\left(1{+}\zeta_p^j{-}\zeta_p^{2j}\right){=}L_p$ (le $p$-i\`eme nombre de Lucas) et $\prod_{j=1}^{p-1}\left(1{-}\zeta_p^j\right){=}p$, le signe et le r\'esidu modulo $p$ des polyn\^omes sym\'etriques des $1{+}\zeta_p{-}\zeta_p^2$. Une m\'ethode algorithmique de recherche des coefficients de $\mathcal N_{p,i_1,i_2}$ est d\'evelopp\'ee.

\end{resume}

\begin{abstr}

Consider a strongly $b$-multiplicative sequence and a prime  $p$. Studying its $p$-rarefaction consists in characterizing the asymptotic behaviour of the sums of the first terms indexed by the multiples of $p$. The integer values of 
the ``norm" $3$-variate polynomial $\mathcal N_{p,i_1,i_2}(Y_0,Y_1,Y_2)\!:=\!\prod_{j=1}^{p-1}\left(Y_0{+}\zeta_p^{i_1j}Y_1{+}\zeta_p^{i_2j}Y_2\right),$  where $\zeta_p$ is a primitive $p$-th root of unity, and $i_1,i_2{\in}\{1,2,\dots,p{-}1\},$  determine this asymptotic behaviour. It will be  shown that a combinatorial method can be applied to $\mathcal N_{p,i_1,i_2}(Y_0,Y_1,Y_2).$ The method enables deducing functional relations between the coefficients as well as various properties of the coefficients of $\mathcal N_{p,i_1,i_2}(Y_0,Y_1,Y_2)$, in particular for $i_1{=}1,i_2{=}2,3.$
This method provides relations between binomial coefficients. It gives new proofs of the two identities $\prod_{j=1}^{p-1}\left(1{-}\zeta_p^j\right){=}p$ and $\prod_{j=1}^{p-1}\left(1{+}\zeta_p^j{-}\zeta_p^{2j}\right){=}L_p$ (the $p$-th  Lucas number). The sign and the residue modulo $p$ of the symmetric polynomials of $1{+}\zeta_p{-}\zeta_p^2$ can also be obtained. An algorithm for computation of coefficients of $\mathcal N_{p,i_1,i_2}(Y_0,Y_1,Y_2)$ is developed.

\end{abstr}

\bigskip

\section{Introduction}

This article deals with a combinatorial method adapted to the coefficients of homogeneous $3$-variate ``norm" polynomials which determine the asymptotic behaviour of rarified sums of a strongly $b$-multiplicative sequence. 
The general definition of a strongly $b$-multiplicative sequence of complex numbers (see \cite{Alkauskas}) can be written as:
\begin{definition}
\label{DefStronglyMul}
Let $(t_n)_{n\geqslant0}$ be a sequence of complex numbers and $b\geqslant2$  an integer. The sequence $(t_n)_{n\geqslant0}$ is called \it strongly b-multiplicative \rm if it satisfies, for each $n\in\N$, the equation
\[t_n=\prod_{i=0}^l t_{c_i},\]
where $n=\sum_{i=0}^l c_ib^i$ is the $b$-ary expansion of a natural integer $n$. Additionally, we ask that $t_0=1$ or $t_n$ is identically zero.
\end{definition}
This definition ensures that $t_n$ does not depend on the choice of the $b$-ary expansion of $n$ (for $b$-ary expansions which may or may not start with zeroes).
If the values of a strongly $b$-multiplicative sequence are either $0$ or roots of unity, it is $b$-automatic. 
An example of such sequence is the $\{1,-1\}$-valued Thue-Morse sequence defined by $b=2,t_1=-1$ (referred as $A106400$ in OEIS, cf \cite{Sloane}). Further in this text we are going to refer to this sequence as the \it Thue-Morse sequence. \rm 
A survey on the strongly $b$-multiplicative sequences with values in an arbitrary compact group can be found in \cite{Morgenbesser}.

Rarified sums (or $p$-rarified sums, the term is due to \cite{DrmotaSkalba2}) of a sequence $(t_n)_{n\geqslant0}$ are the sums of initial terms of the subsequence $(t_{pn})_{n\geqslant0}$ (the \it rarefaction step  \rm $p$ is supposed to be a prime number in this paper). The problem of estimating the speed of growth of these sums has been studied  in \cite{Gelfond},\cite{Dekking},\cite{GKS},\cite{Grabner1993},\cite{Hofer}.
The following result has been proved in a special case.
\begin{proposition}[see \cite{GKS}, Theorem $5.1$] 
\label{PropGKS}
Let $(t_n)_{n\geqslant0}$ be the Thue-Morse sequence. Suppose that $b=2$ is a generator of the multiplicative group $\Fx$. Then,
\begin{equation}
\label{eqGKS}
\sum_{n<N,p \,\mid\, n}t_n=O\left(N^\frac{\log p}{(p-1)\log 2}\right) 
\end{equation}
and this exponent cannot be decreased.
\end{proposition}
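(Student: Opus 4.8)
The plan is to analyze the rarefied sum via discrete Fourier analysis on the cyclic group $\Z/p\Z$. Writing the indicator of $p\mid n$ as $\frac1p\sum_{i=0}^{p-1}\zeta_p^{in}$, I would express
\[
\sum_{n<N,\,p\,\mid\,n}t_n=\frac1p\sum_{i=0}^{p-1}\sum_{n<N}t_n\zeta_p^{in}.
\]
The term $i=0$ gives $\frac1p\sum_{n<N}t_n$, which for the Thue-Morse sequence is known to be $O(N^{\log 3/\log 4})$ (or one checks it is dominated by the terms below), so the dominant contribution comes from the twisted sums $S_i(N):=\sum_{n<N}t_n\zeta_p^{in}$ for $i\neq0$. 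The core of the argument is therefore to understand the growth of these character-twisted sums of the Thue-Morse sequence.

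First I would exploit the strong $2$-multiplicativity to get a recursive/product structure for $S_i(N)$. For $N=2^k$ a power of two, the multiplicativity $t_{2n+\varepsilon}=t_\varepsilon t_n$ together with splitting the sum by the lowest binary digit yields a factorization
\[
\sum_{n<2^k}t_n\zeta_p^{in}=\prod_{m=0}^{k-1}\bigl(1+t_1\,\zeta_p^{i2^m}\bigr),
\]
using $t_1=-1$. Since $2$ generates $\Fx$, as $m$ ranges over $0,1,\dots,p-2$ the exponents $2^m$ run through all of $\{1,2,\dots,p-1\}$ modulo $p$, so grouping the factors in blocks of length $p-1$ produces the full product $\prod_{j=1}^{p-1}\bigl(1-\zeta_p^{ij}\bigr)$ over each complete period. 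This is precisely where the ``norm'' quantity enters: the magnitude of the twisted sum over one period is governed by $\bigl|\prod_{j=1}^{p-1}(1-\zeta_p^{j})\bigr|$, and the growth rate per $p-1$ binary digits is dictated by this product.

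From here the estimate is essentially a geometric-growth bookkeeping. Each block of $p-1$ successive binary digits multiplies the partial product by a factor of fixed modulus $\rho:=\bigl|\prod_{j=1}^{p-1}(1-\zeta_p^{ij})\bigr|$, which is independent of $i\neq0$ by Galois conjugacy. After $k$ digits the twisted sum has modulus about $\rho^{k/(p-1)}$, i.e.\ $N^{\frac{\log\rho}{(p-1)\log 2}}$ with $N=2^k$. I would then identify $\rho$: the relevant exponent in \eqref{eqGKS} is $\frac{\log p}{(p-1)\log 2}$, which forces $\rho=p$, matching the identity $\prod_{j=1}^{p-1}(1-\zeta_p^{j})=p$ quoted in the abstract. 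To pass from powers of two to general $N$, I would write the binary expansion of $N$ and telescope, bounding the sum over $[0,N)$ by a sum of the block-products over the successive binary intervals; each such piece is controlled by the same exponential rate, and summing the geometric contributions preserves the $O$-bound. Optimality (that the exponent cannot be lowered) follows by choosing a subsequence $N=2^{k(p-1)}$ along which the twisted sum attains modulus exactly $p^{k}=N^{\log p/((p-1)\log 2)}$, so the $i\neq0$ term is not subject to cancellation and genuinely realizes this order.

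The main obstacle I anticipate is controlling the passage from $N=2^k$ to arbitrary $N$ and ensuring the leading behaviour is not destroyed by cancellation between the $i=1,\dots,p-1$ twisted sums or by the interaction of the incomplete final block with the completed blocks. One must check that the various $S_i(N)$ do not conspire to cancel, and that the partial (non-period-aligned) factors contribute only a bounded multiplicative constant rather than an extra power of $N$; this requires a careful triangle-inequality estimate on the tail and a lower-bound argument, along the chosen subsequence, showing the main term survives.
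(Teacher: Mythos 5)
Your approach is essentially the one the paper itself uses: it does not reprove this Proposition directly (it is cited from [GKS]) but generalizes it in Proposition \ref{PropAsymptotique}, whose proof is exactly your scheme --- the Fourier decomposition $1_{p\mid n}=\frac1p\sum_i\zeta_p^{in}$, followed by the observation that each twisted sequence $(\zeta_p^{in}t_n)_n$ is strongly $2^{p-1}$-multiplicative, so that Lemma \ref{PropSPartielles} (which is precisely your ``telescoping over the binary expansion'' in blocks of $p-1$ digits) gives growth rate governed by $\sum_{n<2^{p-1}}\zeta_p^{in}t_n=\prod_{j=1}^{p-1}(1-\zeta_p^{ij})=p$. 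Your optimality sketch also goes through cleanly, since along $N=2^{k(p-1)}$ each twisted sum equals the \emph{positive real number} $p^k$ exactly (so the $i\ne0$ terms add constructively) while $S_0(N)=0$. The one genuine error is your treatment of the $i=0$ term: the claim $\sum_{n<N}t_n=O(N^{\log3/\log4})$ is both false for the $\{1,-1\}$-valued Thue--Morse sequence and, if it were true, would \emph{not} be dominated by the other terms, since $\log3/\log4\approx0.79$ exceeds $\frac{\log p}{(p-1)\log2}$ for every $p\geqslant5$, so the hedge ``dominated by the terms below'' does not save the argument as written. The repair is immediate from your own product formula at $i=0$: the factor per binary digit is $1+t_1=0$, so $\bigl|\sum_{n<N}t_n\bigr|\leqslant1$ for all $N$ (this is the case $|\psi(q)|\leqslant1$ of Lemma \ref{PropSPartielles}, giving at worst $O(\log N)$ in general and $O(1)$ here). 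With that correction your argument is complete and matches the paper's.
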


We are going to study this problem in a more general case. In Section \ref{secSommesPrar} we generalize Proposition \ref{PropGKS} to Proposition \ref{PropAsymptotique} valid for a large subclass of strongly $b$-multiplicative sequences (with different values of $b$). Proposition \ref{PropAsymptotique} describes the speed of growth of $p$-rarified sums of a strongly $b$-multiplicative sequence in a form  
\begin{equation}
\label{eqThese0}
\sum_{n<N,p \,\mid\, n}t_n=O\left(N^{\frac1{(p-1)\log b}{\log \left(\mathbf{N}_{\Q(\zeta_p)/\Q}(\sum_{c=0}^{b-1} t_c\zeta_p^c)\right)}}\right) 
\end{equation}
similar to \eqref{eqGKS}. 
The more general formula \eqref{eqThese0} contains the quantity 
\begin{equation}
\label{normXi}
\xi\left((t_n)_{n\geqslant0},p\right):=\mathbf{N}_{\Q(\zeta_p)/\Q}\left(\sum_{j=0}^{b-1} t_j\zeta_p^j\right).
\end{equation}
The properties of this norm expression provide information about the speed of growth of rarified sums. 
In Sections \ref{secCombin}, \ref{secPeq}, \ref{secArray} we develop a method to study these norms.

Denote by $d((t_n)_{n\geqslant0})$ the number of nonzero terms among $t_1,\dots,t_{b-1}$. For technical reasons, the method described in this article concerns only the strongly $b$-multiplicative sequences such that $d((t_n)_{n\geqslant0}){\leqslant}2$; in general, if $d\geqslant3$, it leads to too difficult computations. On the other hand, the case where $d((t_n)_{n\geqslant0}){=}1$ (which concerns, for example, the Thue-Morse sequence) is relatively easy. In the short description of the method, which follows, we assume that $d((t_n)_{n\geqslant0}){=}2$.

Our method consists in dealing with a strongly $b$-multiplicative sequence of monomials instead of the initial strongly $b$-multiplicative sequence of complex numbers. Let $i_1,i_2{\in}\{1,\dots,b{-}1\}$ be the two indices such that $t_{i_1}{\ne}0$ and $t_{i_2}{\ne}0$. Then the strongly $b$-multiplicative sequence of monomials associated with the sequence $(t_n)_{n\geqslant0}$ and the choice of the order of $i_1,i_2$ is defined by
\begin{align*}
\label{seqMonom}
T_0&=1,&\\
T_{i_1}&=Y_1,&\\
T_{i_2}&=Y_2,&\\
T_c&=0\text{ if }c\in\{1,\dots,b{-}1\}\setminus\{i_1,i_2\}\\
T_n&=\prod_{i=0}^l T_{c_i}\text{ otherwise,} 
\end{align*}
where $n=\sum_{i=0}^l c_ib^i$ is the $b$-ary expansion of a natural integer $n$.

Clearly, $T_n{=}0$ if and only if $t_n{=}0$. For example, if $b{=}3$ and $i_1{=}1,i_2{=}2$ then the sequence $(T_n)_{n\geqslant0}$ starts with
\[1,Y_1,Y_2,Y_1,Y_1^2,Y_1Y_2,Y_2,Y_1Y_2,Y_2^2,\dots\]
If $b{=}5$ and $i_1{=}1,i_2{=}2$, it starts with
\[1,Y_1,Y_2,0,0,Y_1,Y_1^2,Y_1Y_2,0,0,Y_2,\dots\]

One can define the $p$-rarefied sums of the sequence $(T_n)_{n\geqslant0}$ and the formal object
\begin{equation}
\label{NormFormelle}
\prod_{j=1}^{p-1}\left(\sum_{c=0}^{b-1} \zeta_p^{jc}T_c\right)
\end{equation}
which plays the role of $\xi\left((T_n)_{n\geqslant0},p\right)$. One can write \eqref{NormFormelle} explicitly as
\begin{equation}
\label{normPolynom0}
\bar{\mathcal N}_{p,i_1,i_2}(Y_1,Y_2)=\prod_{j=1}^{p-1}\left(1+\zeta_p^{i_1j}Y_1+\zeta_p^{i_2j}Y_2\right),
\end{equation}
and homogenize this polynomial, which defines
\begin{equation}
\label{normPolynom}
\mathcal N_{p,i_1,i_2}(Y_0,Y_1,Y_2)=\prod_{j=1}^{p-1}\left(Y_0+\zeta_p^{i_1j}Y_1+\zeta_p^{i_2j}Y_2\right).
\end{equation}
The norm \eqref{normXi} is then recovered as the value $\mathcal N_{p,i_1,i_2}(1,t_{i_1},t_{i_2}).$ By definition, $\mathcal N_{p,i_1,i_2}(Y_0,Y_1,Y_2)$ is the norm of $(Y_0+\zeta_p^{i_1}Y_1+\zeta_p^{i_2}Y_2)$ as a polynomial in the $4$ variables $Y_0,Y_1,Y_2,\zeta_p$ relative to the extension of fields $\Q(\zeta_p)/\Q$ in the sense of the extended definition of norm introduced in \cite{Trager}.

The form \eqref{normPolynom} of ``norm" polynomial reveals to be common for the strongly $b$-multiplicative sequences which satisfy $d((t_n)_{n\geqslant0}){=}2$ (as defined above). In order to retrieve a particular sequence from this form, one should set the formal variables $Y_0,Y_1,Y_2$ to special values, fix the two residue classes $i_1,i_2$ and take a base $b$ (bigger than the smallest positive representatives of $i_1,i_2,$ and such that the residue class of $b$ modulo $p$ is in $\Fx$, and it generates this multiplicative group). Since the form \eqref{normPolynom} inherits the properties of its coefficients, any functional relation between these coefficients can be considered as a key result.

In this context, Sections \ref{secCombin} and \ref{secPeq} enunciate a combinatorial interpretation of the coefficients of \(\mathcal N_{p,i_1,i_2}\) in terms of the following counting problem.
\begin{problem}\label{pb-combin-form}
Let $p$ be a prime number, let $\mathbf{f}$ be a vector of length $p{-}1$, all elements of which are residue classes modulo $p$ among $0,i_1,i_2$  
(i.e., $\mathbf{f}{\in}\{0,i_1,i_2\}^{p{-}1}{\subset}\F^{p-1}$). Let $i$ be an element of $\F.$ Find the number of vectors $\mathbf{x}{\in}\F^{p-1}$ which are permutations of $(1,2,\dots,p-1)$ and such that
\[\mathbf{f}\cdot\mathbf{x}=i.\]
\end{problem}
The equivalence of Problem \ref{pb-combin-form} and the problem of determining the coefficients of \(\mathcal N_{p,i_1,i_2}\) is made explicit in Proposition \ref{prDeltaSym}.   

Our main result is the following.
\begin{theorem}[equivalent to Theorem \ref{Pascal}]\label{Pascal0}
Let $p$ be an odd prime, and $i_1,i_2\in\Fx$  such that $i_1\ne i_2$. Denote by $\triangle^{i_1,i_2}(n_1,n_2,p)$ the coefficient of the term $Y_0^{p-1-n_1-n_2}Y_1^{n_1}Y_2^{n_2}$ in $\mathcal N_{p,i_1,i_2}(Y_0,Y_1,Y_2)$. Fix exponents $n_1,n_2\in\{1,\dots,p-2\}$ such that $n_1+n_2<p.$ Then,
\begin{multline}
\label{PascalAp0}
\triangle^{i_1,i_2}(n_1,n_2,p)\equiv-\triangle^{i_1,i_2}(n_1-1,n_2,p)-\triangle^{i_1,i_2}(n_1,n_2-1,p)\text{ \rm mod }p
\end{multline}
and if  $p\nmid n_1i_1+n_2i_2$, the equality
\begin{multline}
\label{PascalA0}
\triangle^{i_1,i_2}(n_1,n_2,p)=-\triangle^{i_1,i_2}(n_1-1,n_2,p)-\triangle^{i_1,i_2}(n_1,n_2-1,p)
\end{multline}
holds.
\end{theorem}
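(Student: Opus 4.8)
The plan is to deduce both statements from a single coefficient of the \emph{full} product over all $p$-th roots of unity, where an extra symmetry becomes available. Introduce
\[
P(Y_0,Y_1,Y_2):=\prod_{j=0}^{p-1}\bigl(Y_0+\zeta_p^{i_1j}Y_1+\zeta_p^{i_2j}Y_2\bigr),
\]
whose $j=0$ factor is $Y_0+Y_1+Y_2$, so that $P=(Y_0+Y_1+Y_2)\,\mathcal N_{p,i_1,i_2}$. Writing $\gamma(a,n_1,n_2)$ for the coefficient of $Y_0^aY_1^{n_1}Y_2^{n_2}$ in $P$ (with $a+n_1+n_2=p$), I would first compare coefficients of $Y_0^{p-n_1-n_2}Y_1^{n_1}Y_2^{n_2}$ on both sides of this factorization, which gives directly
\[
\gamma(p-n_1-n_2,n_1,n_2)=\triangle^{i_1,i_2}(n_1,n_2,p)+\triangle^{i_1,i_2}(n_1-1,n_2,p)+\triangle^{i_1,i_2}(n_1,n_2-1,p).
\]
Thus both assertions reduce to showing that this coefficient of $P$ vanishes (for the equality \eqref{PascalA0}) or is divisible by $p$ (for the congruence \eqref{PascalAp0}).

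Next I would expand $\gamma$ combinatorially: selecting one monomial in each factor amounts to fixing an ordered partition $\{0,1,\dots,p-1\}=S_0\sqcup S_1\sqcup S_2$ with $|S_1|=n_1,\ |S_2|=n_2$, whence
\[
\gamma(a,n_1,n_2)=\sum_{S_0\sqcup S_1\sqcup S_2}\zeta_p^{\,i_1\sigma_1+i_2\sigma_2},\qquad \sigma_k:=\sum_{j\in S_k}j,
\]
the exponents read modulo $p$ (this recovers the combinatorial description of the coefficients, cf.\ Problem \ref{pb-combin-form}). Because the index set is the whole of $\Z/p\Z$, the translation $S_k\mapsto S_k+t$ is a bijection on partitions of type $(a,n_1,n_2)$, and it multiplies the summand by $\zeta_p^{\,t(i_1n_1+i_2n_2)}$ since $\sigma_k\mapsto\sigma_k+t\,|S_k|$. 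Relabelling the sum yields $\gamma(a,n_1,n_2)=\zeta_p^{\,t(i_1n_1+i_2n_2)}\gamma(a,n_1,n_2)$ for every $t$. Hence, whenever $p\nmid n_1i_1+n_2i_2$, choosing $t$ with $\zeta_p^{\,t(i_1n_1+i_2n_2)}\ne1$ forces $\gamma=0$, which gives the equality \eqref{PascalA0}.

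For the congruence \eqref{PascalAp0}, which must hold unconditionally, I would analyse the orbits of this $\Z/p\Z$-action. Under the hypotheses $n_1,n_2\geqslant1$ and $n_1+n_2<p$, all three blocks $S_0,S_1,S_2$ are nonempty \emph{and} proper subsets of $\Z/p\Z$; since $\Z/p\Z$ has no nontrivial proper subgroup, no partition is translation-invariant, so every orbit has exactly $p$ elements. In the resonant case $p\mid n_1i_1+n_2i_2$ the summand is constant along each orbit, so $\gamma(a,n_1,n_2)=p\sum_{\text{orbits}}\zeta_p^{\,i_1\sigma_1+i_2\sigma_2}$ is $p$ times an algebraic integer; as $\gamma(a,n_1,n_2)$ is a rational integer (being a coefficient of the Galois-invariant polynomial $P$), the quotient $\gamma/p$ is a rational algebraic integer, hence lies in $\Z$. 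Therefore $p\mid\gamma(a,n_1,n_2)$ in every case, and the recurrence follows.

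The only delicate points are bookkeeping rather than conceptual. One must check that passing to residues modulo $p$ in the exponents is compatible with the translation bijection, and observe that the hypotheses $n_1,n_2\in\{1,\dots,p-2\}$ with $n_1+n_2<p$ are precisely what guarantees the absence of fixed points, since a block of size $0$ or $p$ would create translation-invariant partitions and break the orbit-size count. The main obstacle to articulate carefully is the passage from the full product $P$ (which carries the translation symmetry) back to the reduced product $\mathcal N_{p,i_1,i_2}$ via the factor $Y_0+Y_1+Y_2$; the final ``rational algebraic integer lies in $\Z$'' step is standard but should be stated explicitly, as it is what upgrades divisibility-by-$p$ of $\gamma$ from an algebraic to an arithmetic statement.
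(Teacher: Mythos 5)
Your proof is correct, but it follows a genuinely different route from the paper's. The paper works with the counting quantities $C^{i_1,i_2}_i(n_1,n_2,p)$ and performs a M\"obius inversion over the partition lattice $\Pi_{n_1+n_2}$, isolating the blocks whose $f$-sum vanishes mod $p$ (``hindrances''); the recurrence then drops out of an explicit expansion (formula \eqref{Celem2}) by splitting the sum according to which element a hindrance avoids. You instead multiply $\mathcal N_{p,i_1,i_2}$ by the missing factor $Y_0+Y_1+Y_2$, observe that the quantity to be controlled is a single coefficient $\gamma$ of the full product over $\Z/p\Z$ --- which is exactly the ``force'' $f^{i_1,i_2}(n_1,n_2,p)$ of \eqref{DefForce} --- and kill it with the translation action $j\mapsto j+t$: the character twist $\zeta_p^{t(i_1n_1+i_2n_2)}$ forces $\gamma=0$ off the resonant locus, and the fixed-point-free orbit count (valid precisely because $1\leqslant n_1,n_2$ and $n_1+n_2<p$ make all three blocks nonempty proper subsets) gives $p\mid\gamma$ on it, the descent from ``$p$ times an algebraic integer'' to ``$p$ times a rational integer'' being the standard $\Q\cap\overline{\Z}=\Z$ step. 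Your argument is shorter and more self-contained for the recurrence itself, and it transparently explains why the exceptional points are exactly the ``sources'' $p\mid i_1n_1+i_2n_2$. What the paper's heavier machinery buys in exchange is the closed expression \eqref{SourcesElem} for the value of the force at a source (your method only exhibits it as $p$ times a sum over translation orbits), and that explicit value is what drives the later computations in Section \ref{secArray} (the Lucas identity and the formulas for $i_1=1$, $i_2=3$). As a proof of Theorem \ref{Pascal0} alone, your approach is complete once you state explicitly that $P$, being symmetric in all $p$-th roots of unity, has coefficients in $\Z$.
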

The relation \eqref{PascalA0} (similar to the recurrence equation of the Pascal's triangle) can be used to find closed formulas for some classes of coefficients (for \it all \rm of them in the case $i_1=1,i_2=2$) and to find the remaining coefficients in a fast algorithmic way. A closed formula for these coefficients is a final goal.

In Section \ref{secArray} we describe an algorithm in $O(p^2)$ additions that calculates the coefficients of \eqref{normPolynom} using this relation. 
We study the case $i_1=1,i_2=2$ and re-prove the result 
\begin{equation}
\label{normLucas0}
\prod_{j=1}^{p-1}\left(1{+}\zeta_p^j{-}\zeta_p^{2j}\right){=}L_p,
\end{equation}
the $p$-th Lucas number (i.e., the $p$-th term of the sequence referred as $A000032$ by OEIS, cf \cite{Sloane}). We formulate two corollaries of the new proof. We also state some results about the case $i_1=1,i_2=3$.

Throughout the paper, $|X|$ and $\#X$ will both refer to the size of a finite set $X$; the symbol $\#$ followed by a system of equations, congruences or inequalities will denote the number of solutions; and $\sum X$, standing for $\sum_{x\in X}x$, will refer to the sum of a finite subset $X$ of a commutative group with additive notation.

The results of this article (except Theorem \ref{thAnQualitePreuve} and Subsection \ref{subsecPascal13}) are part of the Ph.D. thesis \cite{these}.

\section{Partial sums of a stronly $q$-multiplicative sequence.}
\label{secSommesPrar}
We are going to prove an asymptotic result about partial sums of a strongly $q$-multiplicative sequence used in the proof of Proposition \ref{PropAsymptotique}.
\begin{lemma}
\label{PropSPartielles}
Let $q{\geqslant}2$ be an integer and consider a strongly $q$-multiplicative sequence $(\tau_n)_{n\geqslant0}$ of complex numbers of absolute value smaller than or equal to $1$. Denote the partial sums of $(\tau_n)_{n\geqslant0}$ by
$$\psi(N):=\sum_{n<N}\tau_n\ (N\in\N).$$
Then we have the following.

If $|\psi(q)|\leqslant1$, then 
\begin{equation}
\label{SommePartPetite}
\psi(N)=O(\log N).
\end{equation}

If $|\psi(q)|>1$, then 
\begin{equation}
\label{SommePartGrande}
\psi(N)=O\left(N^{\frac1{\log q}{\left(\log \left|\sum_{c=0}^{q-1}\tau_c\right|\right)}}\right).
\end{equation}
\end{lemma}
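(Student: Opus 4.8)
The plan is to exploit the multiplicative structure of $(\tau_n)_{n\geqslant0}$ together with the $q$-ary expansion of $N$. First I would record the two elementary facts that drive everything. Since $\psi(q)=\sum_{n<q}\tau_n=\sum_{c=0}^{q-1}\tau_c$, the exponent appearing in \eqref{SommePartGrande} is exactly $\tfrac{1}{\log q}\log|\psi(q)|$. Moreover, strong $q$-multiplicativity gives the block identity $\psi(q^k)=\psi(q)^k$: writing every $n<q^k$ as $n=\sum_{i=0}^{k-1}c_iq^i$ and using $\tau_n=\prod_{i=0}^{k-1}\tau_{c_i}$, the sum over all digit strings factors as
\[
\sum_{n<q^k}\tau_n=\prod_{i=0}^{k-1}\Bigl(\sum_{c=0}^{q-1}\tau_c\Bigr)=\psi(q)^k.
\]

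Next I would decompose the partial sum along the $q$-ary expansion $N=\sum_{i=0}^k a_iq^i$ with $a_k\neq0$, so that $k=\lfloor\log_q N\rfloor$. I partition $[0,N)$ according to the most significant position $j$ at which an integer $n$ first drops below $N$: such an $n$ agrees with $N$ in all positions $>j$, carries a digit $c_j<a_j$ in position $j$, and has an arbitrary lower part $r<q^j$. For any $n$ in this block, strong multiplicativity factors the value as $\tau_n=\bigl(\prod_{i>j}\tau_{a_i}\bigr)\,\tau_{c_j}\,\tau_r$. Summing over the block, letting $c_j$ run over $0,\dots,a_j-1$ and $r$ over $0,\dots,q^j-1$, and invoking the block identity, yields
\[
\psi(N)=\sum_{j=0}^{k}\Bigl(\prod_{i>j}\tau_{a_i}\Bigr)\Bigl(\sum_{c=0}^{a_j-1}\tau_c\Bigr)\psi(q)^{j}.
\]

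Finally I would read off the two regimes from this formula. Because $|\tau_n|\leqslant1$, the prefix product has modulus at most $1$ and the inner sum has modulus at most $a_j\leqslant q-1$, so $|\psi(N)|\leqslant(q-1)\sum_{j=0}^{k}|\psi(q)|^{j}$. If $|\psi(q)|\leqslant1$, each summand is at most $1$ and the bound becomes $(q-1)(k+1)=O(\log N)$, giving \eqref{SommePartPetite}. If $|\psi(q)|>1$, the geometric sum is $O(|\psi(q)|^{k})$; since $k\leqslant\log N/\log q$ and the base exceeds $1$, we get $|\psi(q)|^{k}\leqslant N^{\log|\psi(q)|/\log q}$, which is exactly \eqref{SommePartGrande} after substituting $|\psi(q)|=|\sum_{c=0}^{q-1}\tau_c|$. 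I do not expect a genuine obstacle here: the content is standard digit-sum bookkeeping, and the only point requiring care is setting up the prefix-matching partition of $[0,N)$ and verifying the factorization of $\tau_n$ within each block; once that is correct, the case split and the geometric estimate are immediate.
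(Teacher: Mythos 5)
Your proof is correct, and the central identity you derive,
\[
\psi(N)=\sum_{j=0}^{k}\Bigl(\prod_{i>j}\tau_{a_i}\Bigr)\,\psi(a_j)\,\psi(q)^{j},
\]
is exactly the decomposition \eqref{Spsi} used in the paper (your prefix-matching partition of $[0,N)$ is the same as the paper's telescoping over the blocks $[\eta(N,q^{i+1}),\eta(N,q^{i}))$). The case $|\psi(q)|\leqslant1$ is then handled identically. Where you genuinely diverge is the case $|\psi(q)|>1$: you finish by bounding each term of the sum by $(q-1)|\psi(q)|^{j}$ and summing the geometric series, which immediately gives $O(|\psi(q)|^{k})=O(N^{\log|\psi(q)|/\log q})$. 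The paper instead extends $\psi$ to a function of a real argument via the same formula, verifies that this extension is well defined on numbers with two $q$-ary expansions, proves its continuity, and then uses the homogeneity $\psi(qx)=\psi(q)\psi(x)$ together with a maximum over the compact interval $[1,q]$ to extract the same exponent. Your route is shorter and entirely elementary, and it suffices for the stated $O$-estimate; the paper's heavier construction produces an exact self-similar functional description of $\psi$ (useful, e.g., for showing the exponent is sharp or for finer analysis of the fluctuation), which your geometric-series bound does not provide. For the lemma as stated, your argument is complete.
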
 
\begin{proof}
Denote, for any $N,Q\in\N$ ($Q>0$), 
$$\eta(N,Q):=Q\left\lfloor\frac{N}{Q}\right\rfloor.$$
If $Q=q^m$, then the $q$-ary expansion of $\eta(N,Q)$ can be obtained from the $q$-ary expansion of $N$ by replacing the last $m$ digits by zeroes.

Suppose that $N$ is a natural integer with $q$-ary expansion $N=\sum_{i=0}^l c_iq^i$. Then,
\begin{align}
\label{Spsi}
&\psi(N){=} \sum\limits_{n=0}^{\eta(N,q^l)-1}\tau_n{+} \sum\limits_{n=\eta(N,q^l)}^{\eta(N,q^{l-1})-1}\tau_n{+}\dots{+}
\sum\limits_{n=\eta(N,q)}^{N-1}\tau_n\notag\\
&{=}\left(\sum\limits_{c=0}^{c_l-1}\tau_c\right)\left(\sum\limits_{c=0}^{q-1}\tau_c\right)^l{+}\tau_{c_l}\left(\sum\limits_{c=0}^{c_{l-1}-1}\tau_c\right)\left(\sum\limits_{c=0}^{q-1}\tau_c\right)^{\makebox[2pt]{\scriptsize $\,l{-}1$}}
{+}\dots{+}\prod_{k=1}^l\tau_{c_k}{\cdot}\left(\sum\limits_{c=0}^{c_0-1}\tau_c\right)\notag\\
&=\sum\limits_{i=0}^l\left(\prod\limits_{k=i+1}^l\tau_{c_k}\right){\cdot} \psi(c_i) {\cdot} \psi(q)^i.
\end{align}
If $|\psi(q)|\leqslant1$ then each term of the sum \eqref{Spsi} is bounded (by the maximum of $|\psi(c)|,c=1,\dots,q-1$), therefore $\psi(N)=O(l)=O(\log N).$

Suppose that $|\psi(q)|>1$. Then we are going to extend the definition of the function $\psi(x)$ to all real $x\geqslant0$ using the right-hand side of the formula \eqref{Spsi}. This requires to check that the result does not depend on the choice of the $q$-ary expansion of the argument.

Take $x=q^{-m}X$ where $m\in\Z,X\in\N,$ and the $q$-ary expansion of $X$ is $X=\sum_{i=0}^{m+l}c_{i-m}q^i.$ Then the two $q$-ary expansions of $x$ are $$x=\sum_{i=-m}^{l}c_{i}q^i=\sum_{i=-m+1}^{l}c_{i}q^i+(c_{-m}-1)q^{-m}+\sum_{i=-\infty}^{-m-1}(q-1)q^i.$$
We have to prove the identity
\begin{multline}
\label{IdExpressionsPsiX}
\sum\limits_{i=-m}^{l} \left(\prod_{k=i+1}^l\tau_{c_k}\right)\cdot \psi(c_i) d(q)^i=\\
\sum\limits_{i=-m+1}^l \left(\prod_{k=i+1}^l\tau_{c_k}\right)\cdot \psi(c_i) d(q)^i 
+ \left(\prod\limits_{k=-m+1}^l\tau_{c_k}\right)\cdot \psi(c_{-m}-1) \psi(q)^{-m}\\
+ \psi(q-1)\left(\sum\limits_{i=-\infty}^{-m-1} \left(\prod\limits_{k=i+1}^l\tau_{c_k}\right)\cdot \psi(q)^i \right).
\end{multline}
where we denote $c_i=q-1$ for $i<-m$.

Indeed, some sub-expressions of the right-hand side of \eqref{IdExpressionsPsiX} can be simplified. The last summand can be factored with one factor being
\begin{multline*}
\psi(q-1)\left(\sum\limits_{i=-\infty}^{-m-1} \left(\prod\limits_{k=i+1}^{-m-1}\tau_{c_k}\right)\cdot \psi(q)^i \right)=\psi(q-1)\sum\limits_{i=-\infty}^{-m-1}\tau_{q-1}^{-m-i-1}\psi(q)^i\\
=\psi(q-1)\tau_{q-1}^{-m-1}\left(\frac{\tau_{q-1}}{\psi(q)}\right)^m\frac1{\frac{\psi(q)}{\tau_{q-1}}-1}=\psi(q-1)\frac1{\psi(q)^m(\psi(q)-\tau-{q-1})}\\=\psi(q)^{-m}.
\end{multline*}
Next, the sum of the two last summands in \eqref{IdExpressionsPsiX}   is
\begin{multline*}
\left(\prod\limits_{k=-m+1}^l\tau_{c_k}\right)\cdot \psi(c_{-m}-1) \psi(q)^{-m}+\left(\prod_{k=-m}^l\tau_{c_k}\right)\psi(q)^{-m}\\=\left(\prod\limits_{k>-m}\tau_{c_k}\right)\psi(c_{-m})\psi(q^{-m}).
\end{multline*}
These transformations reduce the right-hand side of \eqref{IdExpressionsPsiX} to the form of the left-hand side, proving the identity. Therefore, $\psi(x)$ is a well-defined function of a real argument.

This function is continuous. Indeed, consider a sequence $(x_n)_{n}$ of positive real numbers which converges to $x>0$. Suppose that either $x_n>x$ for all $n$ or $x_n<x$ for all $n$. Let $x=\sum_{i=-\infty}^{l}c_{i}q^i$ be the $q$-ary expansion of $x$ which has a property chosen depending on the choice above: if $x_n>x,$ the expansion of $x$ does not end by $q-1'$s, if $x_n<x,$ it does not end by zeroes. In both cases, for each $m>0$ there is a rang $\tilde n$ such that $n>\tilde n$ implies that any $q$-ary expansion of $x_n$ (denote it by $x_n=\sum_{i=-\infty}^{l}{\bar c}_{i}q^i$) has all digits before radix point and $m$ digits after radix point identical to those of $x$. This property implies: 
\begin{multline}
|\psi(x)-\psi(x_n)|=\left|\sum\limits_{i=-\infty}^{m-1}\prod_{k>i}\tau_{c_k}\cdot d(c_i)d(b)^i -
\sum\limits_{i=-\infty}^{m-1}\prod_{k>i}\tau(\bar{c}_k)\cdot d(\bar c_i)d(b)^i\right|\\[5mm]
\leqslant 2\max_{c\in\{0,\dots,b-1\}} \sum_{i<-m}|d(b)|^i \xrightarrow[m\to\infty]{}0,
\end{multline}
which proves that the sequence $(\psi(x_n))_n$ converges to $\psi(x)$.

By the definition \eqref{Spsi}, for any $x\geqslant0,$ $\psi(qx)=\psi(q)\psi(x).$ Therefore,
\begin{align*}
\psi(x)&=\left(\frac{x}{q^l}\right){\psi(q)}^l\text{ where }l=\lfloor\log_qx\rfloor,\text{ therefore}\\
|\psi(x)|&\leqslant \left(\max_{x\in[1,q]} |\psi(x)|\right)|\psi(q)|^l=O(|\psi(q)|^l)=O(x^\frac{\log |\psi(q)|}{\log q}).
\end{align*}
Lemma is proved.

\end{proof}
Remark that the second part of this Theorem \eqref{SommePartGrande} has been proved in the article \cite{Grabner1993} (formula ($2.9$)).

The previous Lemma leads to the following asymptotic result about the $p$-rarified sums.
\begin{prop}
\label{PropAsymptotique}
Consider a strongly $b$-multiplicative sequence $(t_n)_{n\geqslant0}$ with values in $\{-1,0,1\}$ and a prime number $p$ such that $b<p$ is a generator of the multiplicative group $\Fx$. Suppose that the following inequality holds:
\begin{equation}
\label{CondDominantTermZeta}
\left|\mathbf{N}_{\Q(\zeta_p)/\Q}\left(\sum_{c=0}^{b-1} t_c\zeta_p^c\right)\right|>\max\left((\sum_{c=0}^{b-1}t_c)^{p-1},1\right)
\end{equation}
where $\zeta_p$ denotes a primitive $p$-th root of unity and $\mathbf{N}_{L/K}$ denotes the norm. Then we have the following estimation:
\begin{equation}
\label{eqThese2}
\sum_{n<N,p \,\mid\, n}t_n=O\left(N^{\frac1{(p-1)\log b}{\log \left(\mathbf{N}_{\Q(\zeta_p)/\Q}(\sum_{c=0}^{b-1} t_c\zeta_p^c)\right)}}\right) 
\end{equation}
\end{prop}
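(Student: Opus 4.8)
The plan is to reduce the rarefied sum to ordinary partial sums of several strongly $q$-multiplicative sequences, with $q:=b^{p-1}$, and then invoke Lemma~\ref{PropSPartielles}. The starting point is the roots-of-unity filter
\[
\sum_{n<N,\,p\mid n}t_n=\frac1p\sum_{j=0}^{p-1}\psi^{(j)}(N),\qquad \psi^{(j)}(N):=\sum_{n<N}t_n\zeta_p^{jn},
\]
which holds because $\frac1p\sum_{j=0}^{p-1}\zeta_p^{jn}$ equals $1$ when $p\mid n$ and $0$ otherwise. Thus it suffices to estimate each twisted partial sum $\psi^{(j)}(N)$ separately.

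The key structural observation is that each twisted sequence $\tau_n^{(j)}:=t_n\zeta_p^{jn}$ is strongly $q$-multiplicative for $q=b^{p-1}$. Since $b$ generates $\Fx$, its order modulo $p$ is exactly $p-1$, so $q\equiv1\pmod p$ and hence $q^m\equiv1\pmod p$ for all $m$. Writing $n=\sum_m C_m q^m$ in base $q$, the relation $t_n=\prod_m t_{C_m}$ (strong $b$-multiplicativity, regrouping the base-$b$ digits into blocks of length $p-1$) combines with $\zeta_p^{jn}=\zeta_p^{j\sum_m C_m q^m}=\zeta_p^{j\sum_m C_m}=\prod_m\zeta_p^{jC_m}$ to give $\tau_n^{(j)}=\prod_m\tau_{C_m}^{(j)}$; moreover $\tau_0^{(j)}=t_0=1$ and $|\tau_n^{(j)}|\leqslant1$. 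I would then compute the single-block sum by expanding over the $p-1$ base-$b$ digits of one base-$q$ digit:
\[
\psi^{(j)}(q)=\sum_{c=0}^{q-1}\tau_c^{(j)}=\prod_{i=0}^{p-2}\Bigl(\sum_{c=0}^{b-1}t_c\zeta_p^{jb^{i}c}\Bigr).
\]
Because $i\mapsto b^i$ runs over all of $\Fx$ as $i$ ranges over $0,\dots,p-2$, and $k\mapsto jk$ permutes $\Fx$ for $j\neq0$, this product equals $\prod_{k=1}^{p-1}\bigl(\sum_{c=0}^{b-1}t_c\zeta_p^{kc}\bigr)$, which is exactly the Galois norm $\mathbf{N}_{\Q(\zeta_p)/\Q}(\sum_{c=0}^{b-1}t_c\zeta_p^c)$, independent of $j$. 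For $j=0$ one instead gets $\psi^{(0)}(q)=(\sum_{c=0}^{b-1}t_c)^{p-1}$.

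With these computations in hand, the estimates follow from Lemma~\ref{PropSPartielles} applied to each $\tau^{(j)}$ with step $q$, so that $\log q=(p-1)\log b$. For $j\neq0$, hypothesis~\eqref{CondDominantTermZeta} gives $|\psi^{(j)}(q)|>1$, so the second case of the lemma yields $\psi^{(j)}(N)=O\bigl(N^{\frac1{(p-1)\log b}\log|\mathbf{N}_{\Q(\zeta_p)/\Q}(\sum_c t_c\zeta_p^c)|}\bigr)$, the target rate. For $j=0$ we have $|\psi^{(0)}(q)|=(\sum_c t_c)^{p-1}$, and since~\eqref{CondDominantTermZeta} forces this to be strictly smaller than the norm, the $j=0$ contribution is dominated by the $j\neq0$ terms (either $O(\log N)$ when $(\sum_c t_c)^{p-1}\leqslant1$, or a strictly smaller power of $N$ otherwise). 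Summing the $p$ contributions and dividing by $p$ gives the claim; finally, because the $t_c$ are real and $\Q(\zeta_p)$ is totally complex, the norm is a product of squared moduli of complex-conjugate pairs and hence positive, so $\log$ and $\log|\cdot|$ agree. I expect the main obstacle to be the structural step: verifying carefully that regrouping base-$b$ digits into blocks of length $p-1$ makes $\tau^{(j)}$ strongly $b^{p-1}$-multiplicative and \emph{simultaneously} turns the one-step sum $\psi^{(j)}(q)$ into the full Galois norm. This double role of the order of $b$ modulo $p$ is the crux, whereas the domination of the $j=0$ term is routine given the strict inequality~\eqref{CondDominantTermZeta}.
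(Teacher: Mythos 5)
Your proposal is correct and follows essentially the same route as the paper: the roots-of-unity filter, the observation that the twisted sequences $(t_n\zeta_p^{jn})$ are strongly $b^{p-1}$-multiplicative, the identification of the one-block sum $\sum_{n<b^{p-1}}t_n\zeta_p^{jn}$ with the Galois norm via the digit expansion and the fact that $b$ generates $\Fx$, and the application of Lemma~\ref{PropSPartielles} with $\log q=(p-1)\log b$ together with hypothesis~\eqref{CondDominantTermZeta} to dominate the $j=0$ term. The only differences are presentational (you spell out the verification of strong $b^{p-1}$-multiplicativity, which the paper merely asserts).
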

\begin{proof}
The norm in \eqref{eqThese2} is real and nonnegative because it is a product of $\frac{p-1}2$ complex-conjugate pairs. Furthermore, it is bigger than $1$ by the hypothesis \eqref{CondDominantTermZeta}. This proves that the right-hand side of \eqref{eqThese2} has a meaning.

The $p$-rarefied sum in the left-hand side can be expanded as
\begin{multline}
\label{Raref2zeta}
\sum_{n<N}1_{p|n} t_n
= \sum_{n<N}\frac1p\left(1+\zeta_p^n+\zeta_p^{2n}+\ldots+\zeta_p^{(p-1)n}\right)t_n
\\=\frac1p\left(\sum_{n<N}t_n+\sum_{n<N}\sum_{j\in\F^\times}\zeta_p^{jn}t_n\right). 
\end{multline}
Remark that the sequences $(t_n)_{n\geqslant0}$ and $(\zeta_p^{jn}t_n)_{n\geqslant0}$ ($j\in\{1,\ldots,p{-}1\}$), which appear in the previous formula, are strongly $b^{p-1}$-multiplicative.

By Lemma \ref{PropSPartielles}  
applied to the sequence $(t_n)_{n\geqslant0}$, one has one of the two estimations
\begin{align*}
\sum_{n<N}t_n&=O\left(N^{\frac1{\log b}\log \left|\sum_{c=0}^{b-1}t_c\right|}\right)\text{ or}\\
\sum_{n<N}t_n&=O(\log N).
\end{align*}
In both cases we get (using the hypothesis \eqref{CondDominantTermZeta}), 
\begin{equation}
\label{SumTnO}
\sum_{n<N}t_n=O\left(N^{\frac1{(p-1)\log b}{\log \left(\mathbf{N}_{\Q(\zeta_p)/\Q}(\sum_{j=0}^{b-1} t_j\zeta_p^j)\right)}}\right).
\end{equation}

Lemma  \ref{PropSPartielles}  
applied to a sequence of the form $(\zeta_p^{jn}t_n)_{n\geqslant0}$ states that
\begin{align}
\sum_{n<N}\zeta_p^{jn}t_n&=O\left(N^{\frac1{(p-1)\log b}\log \left|\sum_{c=0}^{b^{p-1}-1}\zeta_p^{j c}t_c\right|}\right)\text{ or} \label{SumNorm-polynomial-O}\\
\sum_{n<N}\zeta_p^{jn}t_n&=O(\log N).\label{Sum-O-petit}
\end{align}

On the other hand, one can expand the norm of $(\sum_{c=0}^{b-1} t_c\zeta_p^c)$ as
\begin{multline}
\mathbf{N}_{\Q(\zeta_p)/\Q}(\sum_{c=0}^{b-1} t_c\zeta_p^c)=\prod_{i=0}^{p-2}\left(\sum_{c=0}^{b-1}\zeta_p^{b^icj}t_c\right)=\\
\sum_{c_0,\dots,c_{p-2}\in\{0,\dots,b-1\}} \zeta_p^{j(c_0+b c_1+\dots+b^{p-2}c_{p-2})}t_{c_0}\dots t_{c_{p-2}}
\end{multline} 
where the sequences $(c_0,\dots,c_{p-2})$ are nothing else than all possible choices of the index $c$ when the product is expanded. The change of variable $n:=c_0+bc_1+\dots+b^{p-2}c_{p-2}$ leads to a new variable which goes through all integers from $0$ to $b^{p-1}-1$. Therefore,
\begin{equation}
\label{NormToSum}
\mathbf{N}_{\Q(\zeta_p)/\Q}(\sum_{c=0}^{b-1} t_c\zeta_p^c)=\sum_{n=0}^{b^{p-1}-1}\zeta_p^nt_n,
\end{equation}
which is the sum involved in \eqref{SumNorm-polynomial-O}.

Therefore,
\begin{equation}
\label{sumZetaJTO}
\sum_{n<N}\zeta_p^{jn}t_n=O\left(N^{\frac1{(p-1)\log b}{\log \left(\mathbf{N}_{\Q(\zeta_p)/\Q}(\sum_{c=0}^{b-1} t_c\zeta_p^c)\right)}}\right).
\end{equation}

Equations \eqref{Raref2zeta}, \eqref{SumTnO} and  \eqref{sumZetaJTO} lead to the conclusion of the Proposition.
\end{proof}

Proposition \ref{PropAsymptotique} generalizes the first part of Proposition \ref{PropGKS} (the estimation \eqref{eqGKS}) as the Thue-Morse sequence satisfies the conditions of validity of Propoition \ref{PropAsymptotique} and
we get the following:
\[\mathbf{N}_{\Q(\zeta_p)/\Q}\left(\sum_{j=0}^{b-1} t_j\zeta_p^j\right)=\mathbf{N}_{\Q(\zeta_p)/\Q}(1-\zeta_p)=p.\]

Another situation where the norms
\begin{equation}
\label{normXi2}
\mathbf{N}_{\Q(\zeta_p)/\Q}\left(\sum_{j=0}^{b-1} t_j\zeta_p^j\right)
\end{equation}
can be calculated in a straightforward way is the situation where $b=3,t_0{=}t_1{=}1,t_2{=}{-}1$. Using the resultant of the two polynomials $S(X)=X^{p-1}+\dots+1$ and $R(X){=}X^2{-}X{-}1$, one obtains 
\begin{equation}
\label{NormLucas}
\mathbf{N}_{\Q(\zeta_p)/\Q}(1+\zeta_p-\zeta_p^2)=L_p
\end{equation}
the $p$-th term of the Lucas sequence (referred as $A000032$ by OEIS, cf \cite{Sloane}) defined recursively by $L_0\!=\!2,{L_1\!=\!1},L_{n+2}=L_n{+}L_{n+1}$. This result is proved in a different way in Section \ref{SsecTriangle12}.

\section{Combinatorics of partitions of a set.}
\label{secCombin}
 In this section we are going to give an alternative proof of the formula 
\begin{equation}
\label{normUniformisante}
\prod_{j=1}^{j=p-1}\left(X-\zeta^j\right)=1+X+\dots+X^{p-1},
\end{equation}
and the methods of this proof will be re-used in the proof of the functional equation in Section \ref{secPeq}. The new proof uses the properties of  the partially ordered sets $\Pi_n$ of  partitions of a set of size $n$ (a good reference about the properties of those is the Chapter $3.10.4$ of \cite{EnumCombin}).  We are going to prove the following statement, which is equivalent to \eqref{normUniformisante}.

\begin{lemma}
\label{delta1d}
Let $p$ be a prime number and $0\leqslant n<p$ an integer. Define $A_0(n,p)$ as the number of subsets of $\Fx$ of $n$ elements that sum up to $0$ modulo $p$ and $A_1(n,p)$ the number of those subsets that sum up to $1$. Then
$$A_0(n,p)-A_1(n,p)=(-1)^n. $$ 
\end{lemma}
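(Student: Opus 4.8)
The plan is to package the two counting problems into a single generating function and read off $A_0(n,p)-A_1(n,p)$ as one of its coefficients. Fix a primitive $p$-th root of unity $\zeta$ and set
\[
P(X):=\prod_{a=1}^{p-1}\bigl(1+X\zeta^{a}\bigr).
\]
Expanding the product, the coefficient of $X^n$ is the sum $\sum_{S}\zeta^{\sum S}$ taken over all $n$-element subsets $S\subseteq\Fx$, that is $\sum_{i\in\F}A_i(n,p)\,\zeta^{i}$, where $A_i(n,p)$ is the number of $n$-subsets of $\Fx$ whose elements sum to $i$ modulo $p$ (so the statement concerns $A_0$ and $A_1$). The whole problem is thus to identify this single coefficient.

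First I would exploit the multiplicative symmetry. For each $c\in\Fx$, multiplication by $c$ is a bijection of $\Fx$ that sends an $n$-subset of sum $i$ to one of sum $ci$; since $p$ is prime, $\Fx$ acts transitively on the nonzero classes, so $A_i(n,p)=A_1(n,p)$ for every $i\ne0$. Using $\sum_{i=0}^{p-1}\zeta^{i}=0$, hence $\sum_{i=1}^{p-1}\zeta^{i}=-1$, the coefficient of $X^n$ collapses to $A_0(n,p)+A_1(n,p)\sum_{i=1}^{p-1}\zeta^{i}=A_0(n,p)-A_1(n,p)$. It therefore suffices to prove that the coefficient of $X^n$ in $P(X)$ equals $(-1)^n$.

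Second I would evaluate $P(X)$ in closed form, using only the elementary factorization $\prod_{a=0}^{p-1}(Y-\zeta^{a})=Y^{p}-1$ (not the cyclotomic identity the section aims to re-prove). Writing $1+X\zeta^{a}=\zeta^{a}(X+\zeta^{-a})$ and noting $\prod_{a=1}^{p-1}\zeta^{a}=\zeta^{p(p-1)/2}=1$ for odd $p$, a reindexing $\zeta^{-a}\mapsto\zeta^{a}$ gives $P(X)=\prod_{a=1}^{p-1}(X+\zeta^{a})$. Substituting $Y=-X$ in the factorization and dividing out the $a=0$ factor yields
\[
\prod_{a=1}^{p-1}\bigl(X+\zeta^{a}\bigr)=\frac{X^{p}+1}{X+1}=\sum_{k=0}^{p-1}(-1)^{k}X^{k}.
\]
Comparing the coefficient of $X^n$ then gives $A_0(n,p)-A_1(n,p)=(-1)^n$, which is the assertion; the degenerate case $p=2$ is checked by hand.

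The argument is short, and I expect the delicate part to be purely the bookkeeping in the closed-form evaluation: the reindexing $\zeta^{-a}\mapsto\zeta^{a}$, the vanishing of $\prod_{a=1}^{p-1}\zeta^{a}$, and the sign tracking in passing from $Y^{p}-1$ to $\tfrac{X^{p}+1}{X+1}$ all rely on $p$ being odd, and an off-by-one or a stray sign would silently corrupt the final $(-1)^n$. A genuinely combinatorial route avoiding roots of unity (via the partition lattice $\Pi_n$ and M\"obius inversion) is also available and is presumably the one reused later in the paper; but the generating-function computation above is the most direct way to pin down the \emph{exact} value $(-1)^n$ rather than a mere congruence.
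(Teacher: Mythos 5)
Your argument is correct, and the bookkeeping you were worried about does check out: $\prod_{a=1}^{p-1}\zeta^a=\zeta^{p(p-1)/2}=1$ for odd $p$, the reindexing $a\mapsto-a$ is a bijection of $\Fx$, and $\prod_{a=1}^{p-1}(X+\zeta^a)=(X^p+1)/(X+1)=\sum_{k=0}^{p-1}(-1)^kX^k$ (one slip of wording only: that product of the $\zeta^a$ equals $1$, it does not ``vanish''). But this is a genuinely different proof from the paper's --- indeed it is essentially the proof the paper is deliberately avoiding. The paper announces that Lemma \ref{delta1d} is equivalent to the identity $\prod_{j=1}^{p-1}(X-\zeta^j)=1+X+\dots+X^{p-1}$, which is exactly what your closed-form evaluation amounts to (your ``elementary factorization'' $Y^p-1=\prod_{a}(Y-\zeta^a)$ yields it after one division by $Y-1$), and the stated purpose of Section \ref{secCombin} is to re-prove that identity \emph{without} expanding such a product. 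The paper's own proof instead counts sequences with repetitions allowed (Lemma \ref{E}, a short induction), passes from sequences to subsets by M\"obius inversion over the partition lattice $\Pi_n$ (Lemma \ref{deltaInv}), which shows $A_0(n,p)-A_1(n,p)$ is independent of $p$ for primes $p>n$, and finally pins down the value $(-1)^n$ by an induction on $n$ using Bertrand's postulate together with the complementation symmetry $A_i(n,p)=A_{-i}(p-1-n,p)$. What that much longer route buys is precisely the machinery reused in Section \ref{secPeq}: for the two-coloured counts $C^{i_1,i_2}_i(n_1,n_2,p)$ underlying $\mathcal N_{p,i_1,i_2}$ there is no closed product formula to read coefficients from, and it is the partition-lattice/M\"obius computation, refined by the ``hindrance'' analysis, that produces Theorems \ref{eqPC} and \ref{Pascal}. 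So your generating-function argument is the shortest correct proof of the lemma as stated, but it does not generalize to the results the lemma is a warm-up for; you correctly anticipated this in your closing remark.
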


Let us begin the proof with an obvious observation: if we define similarly the numbers $A_2(n,p)$, $A_3(n,p)$, \dots, $A_{p-1}(n,p)$ , they will all be equal to $A_1(n,p)$, since multiplying a set that sums to $1$ by a constant residue $c\in\Fx$ gives a set that sums to $c$, and this correspondence is one-to-one.

Let us deal with a simpler version of the Lemma 
that allows repetitions and counts sequences instead of subsets, which is formalized in the following.

\begin{definition}
Denote by $E^{k_1,\dots,k_n}_x(n,p)$ (where $x\in\F$ and $k_1,\dots,k_n\in\Fx$) the number of sequences $(x_1,x_2,\dots,x_n)$ of elements of $\Fx$ such that
$$\sum_{i=1}^n k_i x_i=x. $$
\end{definition}
Then we get the following.

\begin{lemma}
\label{E}
If $n$ is even,
$$E^{k_1,k_2,\dots,k_n}_0(n,p)=\frac{(p{-}1)^n{+}p{-}1}{p}\quad \text{\rm  and }\quad E^{k_1,k_2,\dots,k_n}_1(n,p)=\frac{(p{-}1)^n-1}{p};$$
\hspace{0.5cm}if $n$ is odd,
$$E^{k_1,k_2,\dots,k_n}_0(n,p)=\frac{(p{-}1)^n{-}p{+}1}{p}\quad \text{\rm  and }\quad E^{k_1,k_2,\dots,k_n}_1(n,p)=\frac{(p{-}1)^n+1}{p}.$$
In both cases,
$$E^{k_1,k_2,\dots,k_n}_0(n,p)-E^{k_1,k_2,\dots,k_n}_1(n,p)=(-1)^n. $$ 

\end{lemma}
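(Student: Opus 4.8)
The plan is to reduce the count to one that no longer depends on the coefficients $k_1,\dots,k_n$, and then to pin it down by a simple two-quantity bookkeeping together with one extra relation obtained from a recurrence in $n$.

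First I would perform the substitution $y_i := k_i x_i$. Since each $k_i\in\Fx$, the map $x_i\mapsto k_i x_i$ is a bijection of $\Fx$, so the number of sequences $(x_1,\dots,x_n)\in(\Fx)^n$ with $\sum_i k_i x_i = x$ equals the number of sequences $(y_1,\dots,y_n)\in(\Fx)^n$ with $\sum_i y_i = x$. In particular $E^{k_1,\dots,k_n}_x(n,p)$ depends only on $x$ (and on $n,p$); write it $N_x(n)$. Next, multiplying a solution sequence termwise by a fixed $c\in\Fx$ is a bijection between sequences summing to $x$ and those summing to $cx$, so $N_x(n)=N_1(n)$ for every $x\neq 0$. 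Since the total number of sequences is $(p-1)^n$, this yields the first relation
\[
N_0(n)+(p-1)N_1(n)=(p-1)^n.
\]

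To obtain a second relation I would set up a recurrence by conditioning on the last coordinate $y_n$: removing $y_n\in\Fx$ leaves a length-$(n-1)$ sequence summing to $x-y_n$, so $N_x(n)=\sum_{y\in\Fx}N_{x-y}(n-1)$. For $x=0$ the index $-y$ runs over all of $\Fx$, giving $N_0(n)=(p-1)N_1(n-1)$; for $x=1$ exactly one choice ($y=1$) makes $1-y=0$ while the other $p-2$ keep it nonzero, giving $N_1(n)=N_0(n-1)+(p-2)N_1(n-1)$. Subtracting collapses everything to the clean sign-flip $N_0(n)-N_1(n)=-\bigl(N_0(n-1)-N_1(n-1)\bigr)$. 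With the base case $N_0(1)=0,\ N_1(1)=1$ (equivalently $N_0(0)=1,\ N_1(0)=0$), induction gives $N_0(n)-N_1(n)=(-1)^n$, which is the final assertion of the lemma. Solving the resulting $2\times2$ linear system, elimination of $N_0$ gives $pN_1(n)=(p-1)^n-(-1)^n$ and back-substitution gives $N_0(n)=N_1(n)+(-1)^n$; splitting into the parities $(-1)^n=\pm1$ reproduces the four displayed closed forms.

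The computation is elementary; the only step needing care is the asymmetry between the $x=0$ and $x=1$ recurrences — correctly tracking that the ``target $0$'' contribution appears exactly once in the $N_1$ recursion but $p-1$ times in the $N_0$ recursion is what produces the sign flip and hence the factor $(-1)^n$. Alternatively one can bypass the recurrence entirely with a roots-of-unity filter, writing $N_x(n)=\frac1p\sum_{j=0}^{p-1}\zeta_p^{-jx}\bigl(\sum_{y\in\Fx}\zeta_p^{jy}\bigr)^n$ and using $\sum_{y\in\Fx}\zeta_p^{jy}=-1$ for $j\neq 0$ and $=p-1$ for $j=0$; this yields the closed forms directly, but the recurrence argument keeps the proof purely combinatorial, in keeping with the spirit of this section.
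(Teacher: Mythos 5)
Your proof is correct and follows essentially the same route as the paper: the same recurrence in $n$ obtained by conditioning on the last coordinate (the paper writes $E_0(n)=(p-1)E_1(n-1)$ and $E_1(n)=E_0(n-1)+(p-2)E_1(n-1)$, identical to your relations for $N_0,N_1$), combined with induction. Your preliminary reduction to $k_i=1$ via $y_i=k_ix_i$ and your solving of the $2\times2$ system are harmless streamlinings of what the paper does implicitly.
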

\begin{proof}
\it By induction on n.\rm\  For $n=0$ or $n=1$ the result is trivial. For bigger $n$ we always get:
$$E_0^{k_1,k_2,\dots,k_n}(n,p)=(n-1)E_1^{k_1,k_2,\dots,k_{n-1}}(n-1,p) $$
and
$$E_1^{k_1,k_2,\dots,k_n}(n,p)=E_0^{k_1,k_2,\dots,k_{n-1}}\textit{}(n-1,p)+(p-2)E_1^{k_1,k_2,\dots,k_{n-1}}(n-1,p), $$
since the sequences of length $n$ of linear combination (with coefficients $k_i$) equal to $x$  are exactly expansions of sequences of length $n-1$ of linear combination different from $x$, and this correspondence is one-to-one. Injecting formulas for $n-1$ concludes the induction.
\end{proof}

Now we are going to prove Lemma \ref{delta1d} for small $n$. If $n=0$ or $n=1$, Lemma is clear. For $n=2$, there is one more sequence $(x,y)\in\Fx{^2}$ that sums up to $0$, but that counts the sequences of the form $(x,x)$ which should be removed. Since $p$ is prime, these sequences contribute once for every nonzero residue modulo $p$, and removing them increases the zero's ``advantage" to $2$. Now, we have to identify 
$(x,y)$ and $(y,x)$ to be the same, so we get the difference $1$ back, establishing Lemma $1$ for $n=2$.

For $n=3$, counting all the sequences $(x,y,z)\in\Fx$ gives a difference $E_0-E_1=-1 $. The sequences $(x,x,z)$ contribute one time more often to the sum equal to $0$, so removing them adds $-1$ to the total difference. The same thing applies to sequences of the form $(x,y,y)$ and $(x,y,x)$. After removing them, we get an intermediate  difference of $-4$, but the triples of the form $(x,x,x)$ have been removed $3$ times, which is equivalent to saying they count $-2$ times. Therefore, they should be ``reinjected" with  coefficient $2$. As $p$ is prime and  bigger than $3$, the redundant triples contribute once for each nonzero residue; therefore we accumulate the difference of $-4-2=-6$. We have then to identify permutations, that is to divide the score by $6$ which gives the final result $-1$.

Here is the explicit calculation for the case $n=4$:

$$\begin{array}{ll}
1 & \text{\rm  (corresponds to $E_0(4,p)-E_1(4,p)$)}\\
+6 & 	\text{\rm  (for removing \begin{tabular}{ccc}  $(x,x,y,z)$,& $(x,y,x,z)$,&$(x,y,z,x)$,\\ $(x,y,y,z)$,&$(x,y,z,y)$,&$(x,y,z,z)$ \end{tabular})} \\[2mm]
+2\times 4 & \text{\rm  (for re-injecting \begin{tabular}{cc}$(x,x,x,y)$,& $(x,x,y,x)$,\\ $(x,y,x,x)$& and $(x,y,y,y)$\end{tabular})}\\[2mm]
+1\times 3 & \text{\rm  (for re-injecting $(x,x,y,y),(x,y,x,y)$ and $(x,y,y,x)$)}\\[2mm]
+6\times 1 & \text{\rm  (for removing $(x,x,x,x)$)}\\[2mm]
=24 &
\end {array}
$$
which is $4!$, therefore Lemma \ref{delta1d} is proved for $n=4$.

For a general $n$ we can calculate the difference between the number of sequences that sum up to $0$ and the number of those that sum up to $1$ by assigning to all sequences in $\Fx{^n}$ an intermediate coefficient equal to one, then by reducing it by one for each couple of equal terms, then increasing by $2$ for each triple of equal terms, and so on, proceeding by successive adjustments of coefficients, each step corresponding to a ``poker combination" of $n$ cards. If after adding the contributions of all the steps and the initial $(-1)^n$, we get $(-1)^nn!$, Lemma \ref{delta1d} is valid for $n$ independently from $p$ provided that $p>n$ is prime.

Let us introduce a  formalization of these concepts  using the notions exposed in \cite{Rota}. Call a \it partition \rm of the set $\{1,2,\dots,n\}$ a choice of pairwise disjoint nonempty subsets $B_1,B_2,\dots, B_c$ of $\{1,2,\dots,n\}$ of non-increasing sizes $|B_i|$, and such that $B_1\cup B_2 \cup\dots\cup B_c=\{1,2,\dots,n\} $. The set $\Pi_n$ of all partitions of  $\{1,2,\dots,n\}$ is partially ordered by reverse refinement: for each two partitions $\tau$ and $\pi$, we say that $\tau\geqslant\pi$ if each block of $\pi$ is included in a block of $\tau$. We define the M\"obius function $\mu(\hat 0,x)$ on $\Pi_n$  recursively by:\\
\quad if $x=\{\{1\},\{2\},\dots,\{n\}\}=\hat 0$, then $\mu(\hat 0,x)=1$;\\
\quad if $x$ is bigger than $\hat 0$, then 
$$\mu(\hat 0,x)=-\sum_
{\begin{array}{c}
y\in\Pi_n\\
y<x
\end{array}
}\mu(\hat 0,y). $$

By the Corollary to the Proposition $3$ section $7$ of \cite{Rota0} and the first Theorem from the section $5.2.1$ of \cite{Rota}, if $x$ is a partition of type $(\lambda_1,\dots,\lambda_n)$, then
\begin{equation}
\label{exprMu}
\mu(\hat 0,x)=\prod_{i=1}^{n}(-1)^{\lambda_i-1}(\lambda_i-1)! 
\end{equation}
This formula will be useful in Section \ref{secPeq}.

We are also going to use the following definition: let $x=(x_1,x_2,\dots,x_n)$ be a sequence of $n$ nonzero residues modulo $p$ seen as a function
$$x:\{1,2,\dots,n\}\to\Fx. $$ 
Then the \it coimage \rm of $x$ is the partition of $\{1,2,\dots,n\}$, whose blocks are the nonempty preimages of elements of $\Fx$. Now we can prove the following proposition that puts together all the previous study.
\begin{lemma}
\label{deltaInv}
The difference
$$A_0(n,p)-A_1(n,p) $$
does not depend on $p$ provided that $p$ is a prime number bigger than $n$.
\end{lemma}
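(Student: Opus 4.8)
The plan is to make the inclusion-exclusion sketch preceding the statement rigorous by organizing it as a Möbius inversion over the lattice $\Pi_n$, using the coimage map just introduced. First I would pass from subsets to sequences: every subset of $\Fx$ of size $n$ summing to $x$ corresponds to exactly $n!$ sequences $(x_1,\dots,x_n)\in(\Fx)^n$ with pairwise distinct entries summing to $x$, and a sequence has distinct entries precisely when its coimage is the finest partition $\hat0$. Writing $g_x(\pi)$ for the number of sequences summing to $x$ whose coimage equals $\pi$, we therefore have $A_x(n,p)=g_x(\hat0)/n!$ for $x\in\{0,1\}$.

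Next I would relate the $g_x$ to a quantity I can evaluate. Let $f_x(\tau)$ be the number of sequences summing to $x$ that are constant on every block of $\tau$; a sequence is constant on the blocks of $\tau$ exactly when its coimage is $\geqslant\tau$, so $f_x(\tau)=\sum_{\pi\geqslant\tau}g_x(\pi)$. A sequence constant on $\tau$ is the same as a choice of one value $v_i\in\Fx$ per block $B_i$, and if the blocks have sizes $\lambda_1,\dots,\lambda_c$ then its sum is $\sum_i\lambda_i v_i$. Hence $f_x(\tau)=E^{\lambda_1,\dots,\lambda_c}_x(c,p)$ in the notation of Lemma \ref{E}. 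This is the step where the hypothesis $p>n$ enters: since $1\leqslant\lambda_i\leqslant n<p$, each block size reduces to a nonzero residue, so the weights $\lambda_i$ lie in $\Fx$ and Lemma \ref{E} is applicable, giving $f_0(\tau)-f_1(\tau)=(-1)^{c(\tau)}$, where $c(\tau)$ is the number of blocks of $\tau$.

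Finally I would apply Möbius inversion on $\Pi_n$ to the relation $f_x(\tau)=\sum_{\pi\geqslant\tau}g_x(\pi)$, evaluated at $\tau=\hat0$, to obtain $g_x(\hat0)=\sum_{\pi\in\Pi_n}\mu(\hat0,\pi)f_x(\pi)$. Subtracting the cases $x=0$ and $x=1$ and dividing by $n!$ yields
$$A_0(n,p)-A_1(n,p)=\frac1{n!}\sum_{\pi\in\Pi_n}\mu(\hat0,\pi)(-1)^{c(\pi)},$$
where $\mu(\hat0,\pi)$ is given by the product formula \eqref{exprMu} in terms of the block sizes alone. The right-hand side depends only on $n$, not on $p$, which is exactly the assertion. (As a bonus this expression could then be evaluated to recover the value $(-1)^n$ of Lemma \ref{delta1d}.)

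The main obstacle I anticipate is the bookkeeping in the middle step: correctly identifying $f_x(\tau)$ with $E^{\lambda_1,\dots,\lambda_c}_x(c,p)$ and verifying that the hypothesis $p>n$ is precisely what guarantees all block sizes are invertible residues, so that Lemma \ref{E} can be invoked uniformly for every $\tau\in\Pi_n$. Getting the direction of the order relation and of the inversion right (here $\hat0$ is the \emph{finest} partition and $f_x=\sum_{\geqslant}g_x$, so inversion picks up $\mu(\hat0,\pi)$ rather than $\mu(\pi,\hat1)$) is the other place where a sign or indexing slip could creep in.
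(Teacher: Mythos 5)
Your proposal is correct and follows essentially the same route as the paper: pass from subsets to sequences with distinct entries, stratify by coimage in $\Pi_n$, identify the cumulative counts over coarsenings with the quantities $E^{\lambda_1,\dots,\lambda_c}_x(c,p)$ of Lemma \ref{E}, and conclude by M\"obius inversion at $\hat 0$. Your explicit remark that $p>n$ is what keeps the block sizes in $\Fx$ is a point the paper leaves implicit, but the argument is otherwise identical.
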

\begin{proof}
We are going to describe an algorithm that computes this difference (which is the one applied earlier for small values of the argument). For each partition $x\in\Pi_n$, denote by $r_0(x,p)$ the number of sequences $(x_1,x_2,\dots,x_n)$ of elements of $\Fx$  of coimage $x$ that sum up to $0$, and denote by $r_1(x,p)$ the number of those sequences of coimage $x$ that sum up to $1$ and denote $r(x,p)=r_0(x,p)-r_1(x,p)$. Then,
$$n!(A_0(n,p)-A_1(n,p))=r(\hat 0,p). $$

Denote, for each partition $y$ of $\{1,2,\dots,n\}$, 
$$s(y,p)=\sum_{x\geqslant y} r(x,p). $$

Then, by Proposition \ref{E},
\begin{equation}
\label{DefS}
s(y,p)=(-1)^{c(y)} 
\end{equation}
where $c(y)$ is the number of blocks in the partition $y$. By the M\"obius inversion formula (see \cite{Rota}),
\begin{equation}
\label{ValS}
r(\hat 0,p)=\sum_{y\in\Pi_n} \mu(\hat 0,y)s(y,p)=\sum_{y\in\Pi_n}(-1)^{c(y)}\mu(\hat 0,y) . 
\end{equation}

If we compute this sum, we get the value of $A_0(n,p)-A_1(n,p)$ in a way that does not depend on $p$.
\end{proof}

The last move consists in proving that
\begin{equation}
\label{Pascal1dCombi}
\sum_{y\in\Pi_n}(-1)^{c(y)}\mu(\hat 0,y)=(-1)^nn!
\end{equation}
in a way that uses the equivalence with Lemma \ref{delta1d}.
This proof may seem to be artificial because it is no longer used in the Section \ref{secPeq},  and a purely combinatorial and more general proof exists: see the final formula of Chapter $3.10.4$ of \cite{EnumCombin}. 

Remark that $A_0(n,p)=A_0(n,p-1-n)$ since saying that the sum of some subset of $\Fx$ is $0$ is equivalent to saying that the sum of its complement is $0$. For the same kind of reason, $A_1(n,p)=A_{-1}(n,p-1-n)=A_1(n,p-1-n)$. 

Now we can prove Lemma \ref{delta1d} by induction on $n$. It has already been proved for small values of $n$. If $n>4$, by Bertrand's postulate, there is a prime number $p'$ such that $n<p'<2n$. Replace $p$ by $p'$ (by the proposition \ref{deltaInv} this leads to an equivalent statement), then $n$ by $p'-1-n$ (using the above remark). As $p'-1-n<n$, the step of induction is done.

This proof can be analysed from the following point of view: how fast does the number of steps of induction grow as function of $n$?  
Suppose that one step of induction reduces Lemma \ref{delta1d} for $n$ to Lemma \ref{delta1d} for the number $f(n)$ and denote by $R(n)$ the number of steps of induction needed to reach one of the numbers $0$ or $1$ (the formal definitions will follow).
We can prove then the following upper bound on $R(n)$.
\begin{theorem}
\label{thAnQualitePreuve}
Let
\[\mathrm{nextprime}(n):=\min\{p>n \mid p\text{ \rm prime}\}\]
and
\[f(n):=\mathrm{nextprime}(n)-n-1\]
for each $n\in\N$. Further, denote
$$R(n):=\min\{k \mid f^k(n)\in\{0,1\}\}.$$
This definition makes sense, for  $f(n)<n$ for each $n>1$ by the Bertrand's postulate. 

The function $R(n)$ satisfies the estimation
\begin{equation}
R(n)=O(\log\log n).
\end{equation}
\end{theorem}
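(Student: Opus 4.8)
The plan is to reduce the statement to a quantitative bound on prime gaps and then to analyse the iteration of $f$ in logarithmic coordinates, where a sublinear gap bound turns $f$ into a contraction.

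First I would record the arithmetic meaning of $f$. Writing $g(n):=\mathrm{nextprime}(n)-n$ for the distance from $n$ to the next prime, we have exactly $f(n)=g(n)-1\le g(n)$, so any upper bound on prime gaps is an upper bound on $f$. The crucial observation is that Bertrand's postulate, which only gives $g(n)<n$ and hence $f(n)<n$, is \emph{not} strong enough: a merely linear bound $f(n)\le cn$ with $c<1$ would give geometric decay $f^k(n)\le c^kn$ and therefore only $R(n)=O(\log n)$. To reach $O(\log\log n)$ one must invoke a genuinely sublinear bound. I would cite a Hoheisel-type result: there exist constants $\theta\in(0,1)$ and $C>0$ with
\[g(n)\le C\,n^{\theta}\quad\text{for all }n\ge 2\]
(the current record being $\theta=0.525$ of Baker--Harman--Pintz, though any fixed $\theta<1$ suffices). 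Consequently $f(n)\le C\,n^{\theta}$ for all large $n$.

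Next I would linearise. Set $a_0:=n$, $a_{k+1}:=f(a_k)$, and fix a threshold $N_0$ large enough that both $f(m)\le C m^{\theta}$ holds for $m\ge N_0$ and $\log N_0>\tfrac{\log C}{1-\theta}$. While $a_k\ge N_0$ the estimate $a_{k+1}\le C a_k^{\theta}$ applies; taking logarithms and writing $L_k:=\log a_k$ gives the affine recurrence $L_{k+1}\le\theta L_k+\log C$. Subtracting the fixed point $L^\ast:=\tfrac{\log C}{1-\theta}$ yields $L_{k+1}-L^\ast\le\theta(L_k-L^\ast)$, hence $L_k-L^\ast\le\theta^k(\log n-L^\ast)$. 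Let $K$ be the first index with $a_K<N_0$. For every $k<K$ one has $L_k\ge\log N_0>L^\ast$, so combining these gives $\theta^k(\log n-L^\ast)\ge\log N_0-L^\ast>0$. Solving for $k$ produces
\[K-1\le\frac{\log(\log n-L^\ast)-\log(\log N_0-L^\ast)}{\log(1/\theta)}=\frac{\log\log n+o(1)}{\log(1/\theta)},\]
so that $K=O(\log\log n)$.

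Finally I would dispose of the tail. Once $a_K<N_0$, the value lies in the fixed finite range $\{0,1,\dots,N_0-1\}$, and since $f(m)<m$ for every $m>1$ (Bertrand again), the sequence strictly decreases and reaches $\{0,1\}$ in at most $N_0$ further steps, a number independent of $n$. Therefore $R(n)=K+O(1)=O(\log\log n)$, as claimed. The main obstacle, and the essential ingredient, is precisely the sublinear prime-gap input: everything downstream is an elementary contraction argument, but the exponent $\theta<1$ is exactly what upgrades the naive $O(\log n)$ bound obtainable from Bertrand's postulate alone to the stated $O(\log\log n)$. Without a Hoheisel-type theorem the estimate in this strength cannot be reached.
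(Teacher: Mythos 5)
Your proposal is correct and follows essentially the same route as the paper: both invoke the Baker--Harman--Pintz sublinear prime-gap bound to get $f(n)\leqslant Cn^{\theta}$, iterate this in logarithmic coordinates to show the orbit drops below a fixed threshold in $O(\log\log n)$ steps, and absorb the remaining iterations into a constant. The only cosmetic difference is that you keep the constant $C$ and subtract the fixed point of the affine recurrence, whereas the paper absorbs $C$ by passing to a slightly larger exponent $\bar\theta$.
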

\begin{proof}
Denote $\theta=0.525$. By Theorem 1 of \cite{BakerHarmanPinz}, there is a constant $N_0$ such that for all $n>N_0$, the interval $[n-n^{\theta},n]$ contains a prime number. We are going to deduce from this the following result: for each $\bar\theta\in]0.525,1[$ there exists a constant $N_1$ such that $n>N_1$ implies $f(n)<n^{\bar\theta}$.

Indeed, suppose $n>N_0$ and denote $\bar p=\mathrm{nextprime}(n)-1$. Then, by the result cited above,
\begin{equation}
\label{conseqBHP}
n\geqslant \bar p-\bar p^\theta.
\end{equation}
The function
\[
\begin{array}{rrcl}
u:&[N_0,+\infty[&\to&[u(N_0),+\infty[\\&x&\mapsto&x-x^\theta
\end{array}
\]
is strictly increasing, continuous and equivalent to $x$. Therefore,  
the same is valid for its inverse $u^{-1}$.
By \eqref{conseqBHP}, $\bar p\leqslant u^{-1}(n)$, therefore
\begin{equation}
\forall n>N_1\ f(n)=\bar p-n\leqslant\bar{p}^\theta\leqslant(u^{-1}(n))^\theta<n^{\bar\theta}
\end{equation}
for each $\bar\theta\in]\theta,1[$ and for a bound $N_1\geqslant N_0$ that may depend on $\bar\theta$.

The end of the proof is analogous to that of Theorem $1.1$ of \cite{LucaThangadurai}. Denote by $l$ the integer such that $f^{l+1}(n)<N_1\leqslant f^{l}(n)$. Then:
\[n^{{\bar\theta}^l}\geqslant N_0\]
therefore
\[l\log{\bar\theta}+\log\log n\geqslant \log\log N_1\]
which implies
\[l\leqslant-\frac{\log\log n}{\log{\bar\theta}}.\]
Put $b=\max_{1\leqslant m\leqslant N_0}R(m)$, it is a constant. We get:
\[R(n)\leqslant l+1+b\leqslant-\frac{\log\log n}{\log{\bar\theta}}+1+b\]
which proves our claim.
\end{proof}

\section{Pascal's equation.}
\label{secPeq}

We are going to prove the functional equation satisfied by the coefficients of the polynomial $\mathcal N_{p,i_1,i_2}(Y_0,Y_1,Y_2)$ (introduced in \eqref{normPolynom}). To do this, we are going to describe a combinatorial interpretation of these numbers.
\begin{definition}
\label{A}
Let $p,i_1,i_2$ be fixed as in Introduction and $n_1,n_2$ be nonnegative integers such that $n_1+n_2\leqslant p-1$.  
Define
\begin{align}
C^{i_1,i_2}_i(n_1,n_2,p)&=\label{defC}\\
&\#\left\{(x_1,\dots,x_{n_1+n_2})\in{\Fx}^{n_1+n_2}\left|
\begin{array}{@{}c@{}}x_k\ne x_l\text{ if }k\ne l,\\[2pt]
\displaystyle i_1\sum_{k=1}^{n_1}x_k+i_2\smashoperator{\sum_{k=n_1+1}^{n_1+n_2}}x_k=i\end{array}\right.\right\}\notag\\
\intertext{and}
A^{i_1,i_2}_i(n_1,n_2,p)&=\#\left\{(X_1,X_2)\in\mathcal{P}(\Fx)^2\ \left|\begin{array}{@{}c@{}}|X_1|=n_1,|X_2|=n_2,\\ X_1\cap X_2=\emptyset,\\ i_1\sum X_1+i_2\sum X_2=i\end{array} \right.\right\}.\label{defA}
\end{align} 

\end{definition}

Definition \ref{A} matches with the notations from the previous section because of the identity $A^{i_1,i_2}_i(n,0,p)=A^{i_1,i_2}_i(0,n,p)=A_i(n,p)$ (independently from $i_1,i_2$). One can also see that the answer to Problem \ref{pb-combin-form} is $(p{-}1{-}n_1{-}n_2)!C^{i_1,i_2}_i(n_1,n_2,p)$ where $n_1$ (resp., $n_2$) is the number of coordinates of the vector $\mathbf{f}$ equal to $i_1$ (resp., $i_2$). 

From this definition one can see that 
\begin{align*}
C^{i_1,i_2}_1(n_1,n_2,p)=\dots=C^{i_1,i_2}_{p-1}(n_1,n_2,p),\\
\sum_{i=0}^{p-1}C^{i_1,i_2}_i(n_1,n_2,p)=(p-1)\dots(p-n_1-n_2),
\end{align*}
and for any $i$, $A^{i_1,i_2}_i(n_1,n_2,p)=\frac{C^{i_1,i_2}_i(n_1,n_2,p)}{n_1!n_2!}$. 

Only one linear equation should be added to these in order to be able to determine all the numbers defined by \eqref{defC} and \eqref{defA}. Proposition \ref{prDeltaSym} below suggests to research the value of 
\begin{multline*}
\triangle^{i_1,i_2}(n_1,n_2,p)=A^{i_1,i_2}_0(n_1,n_2,p)-A^{i_1,i_2}_1(n_1,n_2,p)\\=
\sum_{\begin{array}{c}X_1,X_2\subset\Fx\\|X_1|=n_1, |X_2|=n_2,\\X_1\cap X_2=\emptyset\end{array}}\zeta_p^{i_1\sum X_1+i_2\sum X_2}.
\end{multline*}

We can express the symmetric polynomials of the quantities $(Y_0+\zeta_p^{i_1j}Y_1+\zeta_p^{i_2j}Y_2)$ in terms of the previously defined numbers via the following. 
\begin{prop}
\label{prDeltaSym}
Let $i_1,i_2$ be two different elements of $\Fx$ and denote by $\sigma_{v,(j=1,\dots,p-1)}$  the elementary symmetric polynomial of degree $v$ in quantities that depend on  an index $j$ varying from $1$ to $p-1$. Then we have the following formal expansion:
\begin{multline}
\label{DeltaSym}
\sigma_{p-1-\delta,(j=1,\dots,p-1)}\left(Y_0+\zeta_p^{i_1j}Y_1+\zeta_p^{i_2j}Y_2\right)=\\\sum_{\begin{array}{c}0\leqslant n_0,n_1,n_2\leqslant p-1 \\n_0+n_1+n_2=p-1\\n_0\geqslant\delta \end{array}} 
\genfrac{(}{)}{0pt}{}{n_0}{\delta}\triangle^{i_1,i_2}(n_1,n_2,p)Y_0^{n_0-\delta}Y_1^{n_1}Y_2^{n_2}.
\end{multline}
In particular,
\begin{multline}
\label{normPolyn2}
\mathcal N_{p,i_1,i_2}(Y_0,Y_1,Y_2)=\\\sum_{\begin{array}{c}0\leqslant n_0,n_1,n_2\leqslant p-1 \\n_0+n_1+n_2=p-1 \end{array}} 
\triangle^{i_1,i_2}(n_1,n_2,p)Y_0^{n_0}Y_1^{n_1}Y_2^{n_2}.
\end{multline}
\end{prop}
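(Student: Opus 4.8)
The plan is to prove first the special case \eqref{normPolyn2} (which is \eqref{DeltaSym} with $\delta=0$) by a direct expansion of the defining product, and then to obtain the general formula for the elementary symmetric polynomials by introducing an auxiliary variable and comparing coefficients of its powers.

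For the special case, I would expand
\[
\mathcal N_{p,i_1,i_2}(Y_0,Y_1,Y_2)=\prod_{j=1}^{p-1}\left(Y_0+\zeta_p^{i_1j}Y_1+\zeta_p^{i_2j}Y_2\right)
\]
by choosing, for each index $j\in\{1,\dots,p-1\}$, one of the three monomials $Y_0$, $\zeta_p^{i_1j}Y_1$, $\zeta_p^{i_2j}Y_2$. Such a choice is encoded by an ordered partition of the index set into three blocks $S_0,S_1,S_2$ (the indices $j$ where $Y_0$, resp.\ $Y_1$, resp.\ $Y_2$ is selected), and it contributes $\zeta_p^{i_1\sum S_1+i_2\sum S_2}\,Y_0^{|S_0|}Y_1^{|S_1|}Y_2^{|S_2|}$. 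Collecting the terms with fixed exponents $(n_0,n_1,n_2)$, $n_0+n_1+n_2=p-1$, the coefficient of $Y_0^{n_0}Y_1^{n_1}Y_2^{n_2}$ is the sum of $\zeta_p^{i_1\sum S_1+i_2\sum S_2}$ over all pairs of disjoint subsets $S_1,S_2$ of sizes $n_1,n_2$. Since $p$ is prime, the index set $\{1,\dots,p-1\}$ is exactly $\Fx$, so this sum is precisely $\triangle^{i_1,i_2}(n_1,n_2,p)$ by the definition displayed just before the statement; this proves \eqref{normPolyn2}.

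For the general case, I would introduce a fresh indeterminate $Z$ and use the generating-function identity for elementary symmetric functions: writing $a_j:=Y_0+\zeta_p^{i_1j}Y_1+\zeta_p^{i_2j}Y_2$, one has
\[
\prod_{j=1}^{p-1}(Z+a_j)=\sum_{\delta=0}^{p-1}\sigma_{p-1-\delta,(j=1,\dots,p-1)}\!\left(a_j\right)\,Z^{\delta}.
\]
On the other hand $Z+a_j=(Z+Y_0)+\zeta_p^{i_1j}Y_1+\zeta_p^{i_2j}Y_2$, so the same product equals $\mathcal N_{p,i_1,i_2}(Z+Y_0,Y_1,Y_2)$. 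Substituting $Y_0\mapsto Z+Y_0$ in the already established \eqref{normPolyn2} and expanding each factor $(Z+Y_0)^{n_0}$ by the binomial theorem yields
\[
\mathcal N_{p,i_1,i_2}(Z+Y_0,Y_1,Y_2)=\sum_{n_0+n_1+n_2=p-1}\ \sum_{\delta=0}^{n_0}\binom{n_0}{\delta}\triangle^{i_1,i_2}(n_1,n_2,p)\,Z^{\delta}Y_0^{n_0-\delta}Y_1^{n_1}Y_2^{n_2}.
\]
Extracting the coefficient of $Z^{\delta}$ from the two expressions and equating them gives exactly \eqref{DeltaSym}, the constraint $n_0\geqslant\delta$ arising from the range of the binomial expansion.

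The argument is essentially bookkeeping, and I do not anticipate a genuine obstacle. The only points requiring care are the formal nature of the identities (everything is an equality in the polynomial ring $\C[\zeta_p][Y_0,Y_1,Y_2,Z]$, so extracting the coefficient of each power of $Z$ is legitimate) and the identification of the index set $\{1,\dots,p-1\}$ of the product with the ground set $\Fx$ of the subsets $X_1,X_2$ appearing in the definition of $\triangle^{i_1,i_2}$, which is what makes the coefficient computed in the first step literally equal to $\triangle^{i_1,i_2}(n_1,n_2,p)$.
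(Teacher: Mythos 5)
Your proof is correct, and the two arguments coincide on the first step: expanding the full product $\prod_{j}\bigl(Y_0+\zeta_p^{i_1j}Y_1+\zeta_p^{i_2j}Y_2\bigr)$ over ordered partitions of the index set $\Fx$ into three blocks and recognizing the coefficient of $Y_0^{n_0}Y_1^{n_1}Y_2^{n_2}$ as $\sum_{X_1,X_2}\zeta_p^{i_1\sum X_1+i_2\sum X_2}=\triangle^{i_1,i_2}(n_1,n_2,p)$ is exactly what the paper does. Where you diverge is in how the general symmetric polynomial $\sigma_{p-1-\delta}$ is handled. The paper treats all $\delta$ at once by a purely combinatorial recombination: it writes $\sigma_{p-1-\delta}$ as a sum over subsets $X\subset\Fx$ of size $p-1-\delta$, expands each partial product over partitions $X=X_0\cup X_1\cup X_2$, and then merges $X_0$ with the $\delta$ omitted indices into a block $X_0'$ partitioning all of $\Fx$; the binomial coefficient $\binom{|X_0'|}{\delta}$ arises as the number of ways to recover the omitted indices inside $X_0'$. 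You instead deduce the general case from the top-degree case \eqref{normPolyn2} via the generating identity $\prod_j(Z+a_j)=\sum_{\delta}\sigma_{p-1-\delta}(a_j)Z^{\delta}$, the observation that this product is $\mathcal N_{p,i_1,i_2}(Z+Y_0,Y_1,Y_2)$, and the binomial theorem applied to $(Z+Y_0)^{n_0}$; the constraint $n_0\geqslant\delta$ and the factor $\binom{n_0}{\delta}$ then fall out of comparing coefficients of $Z^\delta$, which is legitimate since everything is a polynomial identity. Your route is slightly more algebraic and makes the general formula a formal consequence of the special case; the paper's is self-contained and keeps the binomial coefficient's combinatorial meaning visible, which is more in the spirit of the counting arguments used elsewhere in the text. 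Both are complete proofs.
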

\begin{proof}
The symmetric polynomial develops as:
\begin{multline*}
\sigma_{p-1-\delta,(j=1,\dots,p-1)}\left(Y_0+\zeta_p^{i_1j}Y_1+\zeta_p^{i_2j}Y_2\right)
\\=\sum_{\begin{array}{c}X\subset\Fx,\\|X|=p{-}1{-}\delta\end{array}}\prod_{j\in X}\left(Y_0+\zeta_p^{i_1j}Y_1+\zeta_p^{i_2j}Y_2\right)
\\=\sum_{\begin{array}{c}X\subset\Fx,\\|X|=p{-}1{-}\delta\end{array}}\sum_{\begin{array}{c}X_0,X_1,X_2,\\X_0\cup X_1\cup X_2=X,\\X_0,X_1,X_2\text{ disjoint}\end{array}}\zeta_p^{i_1\sum X_1+i_2\sum X_2}Y_0^{|X_0|}Y_1^{|X_1|}Y_2^{|X_2|}
\\=\sum_{\begin{array}{c}X'_0,X_1,X_2,\\X'_0\cup X_1\cup X_2=\Fx\\X'_0,X_1,X_2\text{ disjoint}\end{array}}\genfrac{(}{)}{0pt}{}{|X'_0|}{\delta}\zeta_p^{i_1\sum X_1+i_2\sum X_2} Y_0^{|X'_0|-\delta}Y_1^{X_1}Y_2^{X_2}.
\end{multline*}
When we group the terms of this sum by sizes $n_0=|X'_0|,n_1=|X_1|,n_2=|X_2|$ we obtain \eqref{DeltaSym}.
\end{proof}

The method of proof of Lemma \ref{deltaInv} can be generalized into the following conditional closed formula for the coefficients $\triangle^{i_1,i_2}(n_1,n_2,p)$:
\begin{prop}
\label{propDeltaSimples}
Let $p$ be an odd prime, and $i_1,i_2\in\Fx$ , $n_1,n_2\in\{1,\dots,p-2\}$ such that $i_1\ne i_2$ and $n_1+n_2<p$. Suppose that the multiset consisting of $i_1$ with multiplicity $n_1$ and of $i_2$ with multiplicity $n_2$ should have no nonempty subset of sum multiple of $p$. Then,
\begin{equation}
\label{deltaSimples}
\triangle^{i_1,i_2}(n_1,n_2,p)=(-1)^{n_1+n_2}\genfrac{(}{)}{0pt}{}{n_1+n_2}{n_1}.
\end{equation}
\end{prop}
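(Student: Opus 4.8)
The plan is to generalize the Möbius-inversion argument behind Lemma \ref{deltaInv} (and hence Lemma \ref{delta1d}) to the two-color setting, the hypothesis on the multiset being exactly what makes that generalization go through unchanged. The first step is to pass from the subset count to the sequence count: by the relation $A^{i_1,i_2}_i(n_1,n_2,p)=C^{i_1,i_2}_i(n_1,n_2,p)/(n_1!\,n_2!)$ recorded after Definition \ref{A}, we have $\triangle^{i_1,i_2}(n_1,n_2,p)=(C^{i_1,i_2}_0-C^{i_1,i_2}_1)/(n_1!\,n_2!)$. Hence it suffices to prove
\[
C^{i_1,i_2}_0(n_1,n_2,p)-C^{i_1,i_2}_1(n_1,n_2,p)=(-1)^{n_1+n_2}(n_1+n_2)!,
\]
after which dividing by $n_1!\,n_2!$ produces the asserted $(-1)^{n_1+n_2}\binom{n_1+n_2}{n_1}$.

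To prove this, I would regard a tuple $(x_1,\dots,x_{n_1+n_2})\in{\Fx}^{n_1+n_2}$ as a function on $\{1,\dots,n_1+n_2\}$ in which positions $1,\dots,n_1$ are ``colored $i_1$'' and positions $n_1{+}1,\dots,n_1{+}n_2$ are ``colored $i_2$,'' so that the weighted sum of Definition \ref{A} is $i_1\sum_{k\le n_1}x_k+i_2\sum_{k>n_1}x_k$. For a partition $x\in\Pi_{n_1+n_2}$ let $r(x,p)$ be the number of sequences of coimage $x$ whose weighted sum is $0$ minus the number of those whose weighted sum is $1$; since coimage $\hat 0$ means all entries are distinct, $C^{i_1,i_2}_0-C^{i_1,i_2}_1=r(\hat 0,p)$. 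For $y\in\Pi_{n_1+n_2}$ set $s(y,p)=\sum_{x\ge y}r(x,p)$, which counts sequences constant on each block of $y$ (with the same sum-$0$-minus-sum-$1$ convention), exactly as in the proof of Lemma \ref{deltaInv}.

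The decisive computation is that of $s(y,p)$. A sequence constant on the blocks of $y$ amounts to an assignment $v_j\in\Fx$ to each block $B_j$; if $B_j$ contains $a_j$ positions of color $i_1$ and $b_j$ of color $i_2$, its contribution to the weighted sum is $(a_j i_1+b_j i_2)\,v_j$. Thus $s(y,p)=E^{k_1,\dots,k_{c}}_0-E^{k_1,\dots,k_{c}}_1$ for $k_j:=a_j i_1+b_j i_2$, where $c=c(y)$ is the number of blocks. Here the hypothesis enters, and this is the only place it is used: every block is nonempty, so $(a_j,b_j)\ne(0,0)$ with $a_j\le n_1,\ b_j\le n_2$, meaning that $(a_j,b_j)$ describes a nonempty subset of the multiset consisting of $i_1$ with multiplicity $n_1$ and $i_2$ with multiplicity $n_2$; by assumption its sum $a_j i_1+b_j i_2$ is not divisible by $p$, so $k_j\in\Fx$. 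Lemma \ref{E} then applies verbatim and gives $s(y,p)=(-1)^{c(y)}$, identical to the color-blind case. Möbius inversion on $\Pi_{n_1+n_2}$ together with the already established identity \eqref{Pascal1dCombi} now yields
\[
r(\hat 0,p)=\sum_{y\in\Pi_{n_1+n_2}}\mu(\hat 0,y)\,s(y,p)=\sum_{y\in\Pi_{n_1+n_2}}(-1)^{c(y)}\mu(\hat 0,y)=(-1)^{n_1+n_2}(n_1+n_2)!,
\]
which completes the argument. The main obstacle is conceptual rather than computational: blocks of $y$ may straddle the boundary between the $i_1$-colored and $i_2$-colored positions, producing mixed coefficients $a_j i_1+b_j i_2$, and one must verify that these are all units so that Lemma \ref{E} survives; the ``no nonempty subset of sum a multiple of $p$'' condition is precisely the statement that they are, collapsing the two-color sum onto the purely combinatorial identity \eqref{Pascal1dCombi}.
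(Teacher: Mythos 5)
Your proposal is correct and follows essentially the same route as the paper's proof: pass to the sequence counts $C^{i_1,i_2}_i$, stratify by coimage in the partition lattice $\Pi_{n_1+n_2}$, observe that the hypothesis on the multiset forces every block coefficient $f_{B_j}=a_ji_1+b_ji_2$ to be a unit so that Lemma \ref{E} gives $s(y,p)=(-1)^{c(y)}$, and conclude by M\"obius inversion and \eqref{Pascal1dCombi}. Your identification of exactly where the no-zero-subset-sum hypothesis enters coincides with the paper's use of the fact that all $f_{B_j}$ are nonzero in $\F$.
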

\begin{proof}
Define 
\begin{equation}
\label{NotationCoeffs}
f_k=\left\{\begin{array}{cl}i_1&\text{ if \(k\in\{1,\dots,n_1\}\)}\\i_2&\text{ if \(k\in\{n_1+1,\dots,n_1+n_2\}\)}\end{array}\right.
\end{equation}
and $n:=n_1+n_2 $.

Next, for each partition $x{\in}\Pi_n$ and each $i{\in}\F$ denote by $r^{i_1,i_2}_i(x,n_1,n_2,p)$ the number of sequences $(x_1,x_2,\dots,x_{n{=}n_1{+}n_2})$ of elements of $\Fx$  of coimage $x$ such that
\begin{equation}
\label{equaLin2dims}
i_1\sum_{k=1}^{n_1}x_k+i_2\smashoperator{\sum_{k=n_1+1}^{n_1+n_2}}x_k=i.
\end{equation}
Let us also define $r^{i_1,i_2}(x,n_1,n_2,p)=r^{i_1,i_2}_0(x,n_1,n_2,p)-r^{i_1,i_2}_1(x,n_1,n_2,p)$ and
\begin{align}
s^{i_1,i_2}_i(x,n_1,n_2,p)=\sum_{x'\geqslant x} r^{i_1,i_2}_i(x',n_1,n_2,p),\label{Si1i2iSumRi1i2i}\\
s^{i_1,i_2}(x,n_1,n_2,p)=s^{i_1,i_2}_0(x,n_1,n_2,p)-s^{i_1,i_2}_1(x,n_1,n_2,p).
\end{align}

By definition, 
\begin{equation}
\label{CmoinsCSimple}
C^{i_1,i_2}_0(n_1,n_2,p)-C^{i_1,i_2}_1(n_1,n_2,p)=r^{i_1,i_2}(\hat 0,n_1,n_2,p).
\end{equation}

Consider a partition $x\in\Pi_n.$ The number $s^{i_1,i_2}_i(x,n_1,n_2,p)$ (defined by the formula \eqref{Si1i2iSumRi1i2i}) admits an equivalent definition as the number of sequences $(x_1,x_2,\dots,x_n)$ of elements of $\Fx$  of coimage greater than or equal to $x$ (in the sense of partitions) which satisfy \eqref{equaLin2dims}. Denote by $B_1,\dots,B_{c(x)}$ the blocks of $x$ and
$$f_{B_j}=\sum_{k\in B_j}f_k (j=1,\dots,c(x)). $$
Then, $s^{i_1,i_2}_i(x,n_1,n_2,p)$ is the number of sequences $(x_{B_1},\dots,x_{B_{c(x)}})$ of elements of $\Fx$ (where the terms can be equal or distinct) such that
\begin{equation}
\label{equaLinBlocks}
\sum_{j=1}^{c(x)} f_{B_j} x_{B_j}=i.
\end{equation}
By the hypotheses of the Proposition, all $f_{B_j}$ are nonzero in $\F$. Therefore, by Proposition \ref{E}, 
\begin{equation}
\label{Si1i2Simple}
s^{i_1,i_2}(x,n_1,n_2,p)=(-1)^{c(x)}.
\end{equation}

By the M\"obius inversion formula and the formula \eqref{Pascal1dCombi},
$$r(\hat 0,,n_1,n_2,p)=\sum_{y\in\Pi_n}(-1)^{c(y)}\mu(\hat 0,y)=(-1)^nn!.$$
Therefore,
$$\triangle^{i_1,i_2}(n_1,n_2,p)=\frac1{n_1!n_2!}(C^{i_1,i_2}_0(n_1,n_2,p)-C^{i_1,i_2}_1(n_1,n_2,p))$$
which concludes the proof.
\end{proof}

The condition of Proposition \ref{propDeltaSimples} holds, for example, if the smallest positive representatives of $i_1$ and $i_2$ verify $n_1i_1+n_2i_2<p$.

Without the condition formulated in Proposition \ref{propDeltaSimples},  \eqref{deltaSimples} becomes false: for example, $\triangle^{2,3}(1,1,5)=-3$. 
For the general case, we are going to replace the closed formula by a recursive equation in which  the parameters $i_1,i_2,p$ are fixed, and the recursion is on different values of $n_1,n_2$. The equation is similar to the equation of the Pascal's triangle, and can be formulated as follows:
\begin{theorem}[``Colored" Pascal's equation]
\label{eqPC}
Let $p$ be an odd prime, and $i_1,i_2\in\Fx$ , $n_1,n_2\in\{1,\dots,p-2\}$ such that $i_1\ne i_2$ and $n_1+n_2<p$. Then,
\begin{multline}
\label{PascalBp}
C^{i_1,i_2}_0(n_1,n_2,p)-C^{i_1,i_2}_1(n_1,n_2,p)\equiv\\ n_1C^{i_1,i_2}_1(n_1-1,n_2,p)+n_2C^{i_1,i_2}_1(n_1,n_2-1,p)-\\-n_1C^{i_1,i_2}_0(n_1-1,n_2,p)-n_2C^{i_1,i_2}_0(n_1,n_2-1,p)\text{ \rm mod }p
\end{multline}
and if  $p\nmid n_1i_1+n_2i_2$, the equality
\begin{multline}
\label{PascalC}
C^{i_1,i_2}_0(n_1,n_2,p)-C^{i_1,i_2}_1(n_1,n_2,p)=\\n_1C^{i_1,i_2}_1(n_1-1,n_2,p)+n_2C^{i_1,i_2}_1(n_1,n_2-1,p)-\\-n_1C^{i_1,i_2}_0(n_1-1,n_2,p)-n_2C^{i_1,i_2}_0(n_1,n_2-1,p)
\end{multline}
holds.
\end{theorem}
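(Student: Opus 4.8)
The plan is to reduce both \eqref{PascalBp} and \eqref{PascalC} to a single vanishing statement about one coefficient of an auxiliary product polynomial, and then to exploit additive translation invariance. It is convenient to work first with the normalized quantities $\triangle^{i_1,i_2}(n_1,n_2,p)$. Since $A^{i_1,i_2}_i=C^{i_1,i_2}_i/(n_1!\,n_2!)$ and $\triangle=A_0-A_1$, one has $(C_0-C_1)(n_1,n_2)=n_1!\,n_2!\,\triangle(n_1,n_2)$, while $n_1(C_0-C_1)(n_1-1,n_2)=n_1!\,n_2!\,\triangle(n_1-1,n_2)$ and symmetrically $n_2(C_0-C_1)(n_1,n_2-1)=n_1!\,n_2!\,\triangle(n_1,n_2-1)$. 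Multiplying any additive relation among the three $\triangle$'s by $n_1!\,n_2!$ therefore turns it verbatim into the corresponding relation \eqref{PascalBp}/\eqref{PascalC} among the $C$'s. Thus it suffices to prove that
\[
\triangle^{i_1,i_2}(n_1,n_2,p)+\triangle^{i_1,i_2}(n_1-1,n_2,p)+\triangle^{i_1,i_2}(n_1,n_2-1,p)
\]
is divisible by $p$ in general, and vanishes exactly when $p\nmid n_1i_1+n_2i_2$.

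First I would pass to the generating polynomial. By the dehomogenized form \eqref{normPolynom0} of Proposition \ref{prDeltaSym},
\[
\bar{\mathcal N}_{p,i_1,i_2}(u,v)=\prod_{j=1}^{p-1}\bigl(1+\zeta_p^{i_1j}u+\zeta_p^{i_2j}v\bigr)=\sum_{n_1,n_2}\triangle^{i_1,i_2}(n_1,n_2,p)\,u^{n_1}v^{n_2}.
\]
The decisive remark is that the absent factor for $j=0$ is exactly $1+u+v$, so adjoining it completes the product to a product over all of $\F$:
\[
H(u,v):=(1+u+v)\,\bar{\mathcal N}_{p,i_1,i_2}(u,v)=\prod_{x\in\F}\bigl(1+\zeta_p^{i_1x}u+\zeta_p^{i_2x}v\bigr).
\]
The coefficient of $u^{n_1}v^{n_2}$ on the left is precisely $\triangle(n_1,n_2)+\triangle(n_1-1,n_2)+\triangle(n_1,n_2-1)$, so the whole theorem is reduced to understanding one coefficient of $H$.

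Expanding the product gives
\[
[u^{n_1}v^{n_2}]\,H=\sum_{\substack{S_1,S_2\subseteq\F\text{ disjoint}\\|S_1|=n_1,\ |S_2|=n_2}}\zeta_p^{\,i_1\sum S_1+i_2\sum S_2},
\]
a sum over disjoint subsets of the full field $\F$. Now for every $t\in\F$ the translation $(S_1,S_2)\mapsto(S_1+t,S_2+t)$ is a bijection of the index set under which $\sum S_1$ and $\sum S_2$ increase by $n_1t$ and $n_2t$; hence $[u^{n_1}v^{n_2}]H=\zeta_p^{(n_1i_1+n_2i_2)t}\,[u^{n_1}v^{n_2}]H$. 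When $p\nmid n_1i_1+n_2i_2$ one may choose $t$ with $\zeta_p^{(n_1i_1+n_2i_2)t}\ne1$, and as $\Z[\zeta_p]$ is an integral domain this forces the coefficient to be $0$, which is the exact equality \eqref{PascalC}. For the congruence \eqref{PascalBp} it remains to treat $p\mid n_1i_1+n_2i_2$: here I would decompose the index set into orbits of the translation group $\F$. Because $0<n_1<p$, no pair $(S_1,S_2)$ is fixed by all translations (the only translation-invariant subsets of $\F$ are $\emptyset$ and $\F$), so every orbit has size $p$, and along each orbit the exponent is constant. Hence $[u^{n_1}v^{n_2}]H\in p\,\Z[\zeta_p]$; being also a rational integer, it lies in $p\,\Z[\zeta_p]\cap\Z=p\Z$, which gives the divisibility.

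The main obstacle is the initial idea rather than any computation: one must notice that restoring the single missing factor $1+u+v$ converts the Pascal-type combination of coefficients into one coefficient of a product that is symmetric under the additive group $\F$, after which the character sum over translations makes the statement transparent and explains the exact role of the hypothesis $p\nmid n_1i_1+n_2i_2$. The remaining ingredients---the dictionary between the $\triangle$- and $C$-normalizations, the orbit count, and the descent from $p\,\Z[\zeta_p]$ to $p\Z$---are routine.
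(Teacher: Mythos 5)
Your proof is correct, and it takes a genuinely different route from the paper's. After the same dictionary between the $C$- and $\triangle$-normalizations (which the paper uses in the reverse direction, to deduce Theorem \ref{Pascal} from Theorem \ref{eqPC}), you complete the product $\bar{\mathcal N}_{p,i_1,i_2}(u,v)$ by its missing $j=0$ factor $1+u+v$ and read off the Pascal combination $\triangle(n_1,n_2)+\triangle(n_1-1,n_2)+\triangle(n_1,n_2-1)$ as a single coefficient of $\prod_{x\in\F}\bigl(1+\zeta_p^{i_1x}u+\zeta_p^{i_2x}v\bigr)$; translation invariance of the index set under $(S_1,S_2)\mapsto(S_1+t,S_2+t)$ then kills that coefficient when $p\nmid n_1i_1+n_2i_2$, and the orbit decomposition (all orbits have size $p$ because $1\leqslant n_1\leqslant p-2$ rules out fixed points) forces it into $p\,\Z[\zeta_p]\cap\Z=p\Z$ otherwise. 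The paper instead runs M\"obius inversion on the partition lattice $\Pi_{n_1+n_2}$, classifying partitions by their ``hindrance'' blocks, and obtains the explicit polynomial-in-$p$ expansion \eqref{Celem2} of $C_0-C_1$, from which the congruence is the $l=0$ term and the exact recurrence comes from splitting the sum over a distinguished element $m$. Your argument is shorter and makes the role of the hypothesis $p\nmid n_1i_1+n_2i_2$ transparent; what it does not produce is the closed formula \eqref{SourcesElem} for the ``force'' at sources, which the paper extracts from the same computation and relies on later (e.g.\ in \eqref{source12} and Subsection \ref{subsecPascal13}). One cosmetic point: you say the combination ``vanishes exactly when'' $p\nmid n_1i_1+n_2i_2$, but you establish (and only need) the ``if'' direction; the converse is not claimed by the theorem and is not proved by your argument.
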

\begin{proof}

We are going to use the notations of the beginning of the previous proof until the formula \eqref{equaLinBlocks}. We are also going to call a \it hindrance\rm\  a subset $X$ of $\{1,\dots,n_1+n_2\}$ such that $\sum_{m\in X}f_m\equiv0\text{ mod }p$. Proposition \ref{propDeltaSimples} corresponds to the case when there are no hindrances. Then
$$C^{i_1,i_2}_0(n_1,n_2,p)-C^{i_1,i_2}_1(n_1,n_2,p)=(-1)^{n_1+n_2}(n_1+n_2)! $$
and this number is the opposite  of $n$ times $(-1)^{n-1}(n-1)!$ .

In general, the formula \eqref{Si1i2Simple} should be replaced by:
\begin{equation}
\label{valSh}
s^{i_1,i_2}(y,n_1,n_2,p)=(1-p)^{d(y)}(-1)^{c(y)}
\end{equation} 
if the partition $y$ of $\{1,\dots,n_1+n_2\}$ contains $d(y)$ blocks that are hindrances. We should, indeed, count the solutions of the congruences \eqref{equaLinBlocks} for $i=0,1$ (in nonzero residues modulo $p$) and evaluate the difference. Proposition \ref{E} states that if we pay no attention to the indices $j$ that correspond to hindrances (i.e., such that $f_{B_j}=0$), the difference between numbers of solutions of $\sum_{j} f_{B_j}x_{B_j}=0$ and $\sum_{j} f_{B_j}x_{B_j}=1$ is $(-1)^{c-d(y)}$. Moreover, the values of $x_{B_j}$ where $B_j$ are hindrances can be chosen arbitrarily (from $p-1$ options each). The product of these contributions leads to \eqref{valSh}.

The formula \eqref{valSh} can be rewritten as
$$s^{i_1,i_2}(y,n_1,n_2,p)=\sum_{l=0}^{d(y)}\sum_{
\begin{array}{c}
X_1,X_2,\dots,X_l \\
\text{hindrances contained in $y$}
\end{array}}
(-1)^{c(y)-l}p^l $$
where the order of $X_1,X_2,\dots,X_l$ is irrelevant in the sum. Then we get:
\begin{align}
&C^{i_1,i_2}_0(n_1,n_2,p)-C^{i_1,i_2}_1(n_1,n_2,p)=\sum_{y\in\Pi_n} \mu(\hat 0,y)s^{i_1,i_2}(y,n_1,n_2,p)\label{CMu}\\
&=\sum_{
\begin{array}{c}
X_1,X_2,\dots,X_l \\
\text{disjoint hindrances}
\end{array}}
\sum_{
\begin{array}{c}
y\in\Pi_n \\
\text{$y$ contains $X_1,\dots,X_l$}\\
\text{as blocks}
\end{array}}
(-1)^{c(y)-l}\mu(\hat 0,y)p^l\\
&=\sum_{X_1,\dots,X_l} (-1)^{|X_1|+|X_2|+\dots+|X_l|-l}(|X_1|-1)!(|X_2|-1)!\dots(|X_l|-1)!p^l\label{Celementary}\\
&\times\smashoperator[r]{\sum_{
\begin{array}{c}
y\in\Pi_n \\  
\text{$y$ contains $X_1,\dots,X_l$}
\end{array}}}
\hspace{1em}\mu(\hat 0,y{-}X_1{-}X_2{-}\dots{-}X_l)(-1)^{c(y-X_1-X_2-\dots-X_l)}\notag
\end{align}
by factoring $\mu(\hat 0,y)$ according to the formula \eqref{exprMu}. In the last sum, $(y-X_1-X_2-\dots-X_l)$ denotes the partition $y$, where the blocks $X_1,\dots,X_l$ are removed (which is a partition of $(n_1+n_2-|X_1|-\dots-|X_l|)$ elements). 
By applying \eqref{Pascal1dCombi} to the last sum of \eqref{Celementary}, we get
\begin{multline}
\label{Celem2}
C^{i_1,i_2}_0(n_1,n_2,p)-C^{i_1,i_2}_1(n_1,n_2,p)=\\
\sum_{X_1,\dots,X_l} (|X_1|{-}1)!(|X_2|{-}1)!\dots(|X_l|{-}1)!(-1)^{n_1+n_2-l}\\
p^l(n_1{+}n_2{-}|X_1|{-}\dots{-}|X_l|)!.
\end{multline}
From \eqref{Celem2}, 
\begin{multline}
C^{i_1,i_2}_0(n_1,n_2,p)-C^{i_1,i_2}_1(n_1,n_2,p)\equiv(-1)^{n_1+n_2}(n_1+n_2)!\text{ \rm mod }p,
\end{multline}
which implies \eqref{PascalBp}.

Suppose that $\{1,\dots,n_1+n_2\}$ is not a hindrance. In order to prove \eqref{PascalC}, remark that the sum \eqref{Celem2} can be split as
\begin{multline*}
\sum_{X_1,\dots,X_l} (|X_1|{-}1)!(|X_2|{-}1)!\dots(|X_l|{-}1)!(-1)^{n_1+n_2-l}\\
p^l(n_1{+}n_2{-}|X_1|{-}\dots{-}|X_l|)!=  \\
\shoveleft{-\sum_{m=1}^{n_1+n_2} \sum_{\begin{array}{c}
X_1,X_2,\dots,X_l \\
\text{disjoint hindrances}\\
\text{not containing $m$}
\end{array}} (|X_1|{-}1)!(|X_2|{-}1)!\dots(|X_l|{-}1)!}\\
(-1)^{n_1+n_2-l-1}p^l(n_1{+}n_2{-}|X_1|{-}\dots{-}|X_l|{-}1)!
\end{multline*}
then gathered into two parts according to the values of $f_m$:
\begin{multline*}
C^{i_1,i_2}_0(n_1,n_2,p)-C^{i_1,i_2}_1(n_1,n_2,p)= \\
-n_1\sum_{\begin{array}{c}
X_1,X_2,\dots,X_l \\
\text{disjoint hindrances}\\
\text{not containing $1$}
\end{array}} (|X_1|{-}1)!(|X_2|{-}1)!\dots(|X_l|{-}1)!\\
(-1)^{n_1+n_2-l-1}p^l(n_1{+}n_2{-}|X_1|{-}\dots{-}|X_l|{-}1)!\\
-n_2\sum_{\begin{array}{c}
X_1,X_2,\dots,X_l \\
\text{disjoint hindrances}\\
\text{not containing $n_1+1$}
\end{array}} (|X_1|{-}1)!(|X_2|{-}1)!\dots(|X_l|{-}1)!\\
(-1)^{n_1+n_2-l-1}p^l(n_1{+}n_2{-}|X_1|{-}\dots{-}|X_l|{-}1)!
\end{multline*}
By identifying each sum in the last formula to the right-hand side of \eqref{Celem2} with one of the arguments $n_1$ or $n_2$ decreased by $1$, we get \eqref{PascalC}.
\end{proof} 

The numbers $\triangle^{i_1,i_2}(n_1,n_2,p)$  satisfy a similar equation.
\begin{theorem}[``Uncolored" Pascal's equation]
\label{Pascal}
Let $p$ be an odd prime, and $i_1,i_2\in\Fx$ , $n_1,n_2\in\{1,\dots,p-2\}$ such that $i_1\ne i_2$ and $n_1+n_2<p$. Then,
\begin{multline}
\label{PascalAp}
\triangle^{i_1,i_2}(n_1,n_2,p)\equiv-\triangle^{i_1,i_2}(n_1-1,n_2,p)-\triangle^{i_1,i_2}(n_1,n_2-1,p)\text{ \rm mod }p
\end{multline}
and if  $p\nmid n_1i_1+n_2i_2$, the equality
\begin{multline}
\label{PascalA}
\triangle^{i_1,i_2}(n_1,n_2,p)=-\triangle^{i_1,i_2}(n_1-1,n_2,p)-\triangle^{i_1,i_2}(n_1,n_2-1,p)
\end{multline}
holds.
\end{theorem}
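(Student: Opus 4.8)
The plan is to derive Theorem \ref{Pascal} directly from the ``colored'' version, Theorem \ref{eqPC}, by passing from the sequence-counts $C^{i_1,i_2}_i$ to the set-counts $A^{i_1,i_2}_i$ through the normalizing factor $n_1!\,n_2!$. Recall from Definition \ref{A} and the identities following it that
\[
A^{i_1,i_2}_i(n_1,n_2,p)=\frac{C^{i_1,i_2}_i(n_1,n_2,p)}{n_1!\,n_2!},
\qquad
\triangle^{i_1,i_2}(n_1,n_2,p)=\frac{C^{i_1,i_2}_0(n_1,n_2,p)-C^{i_1,i_2}_1(n_1,n_2,p)}{n_1!\,n_2!}.
\]
So the whole argument amounts to dividing the relation of Theorem \ref{eqPC} by $n_1!\,n_2!$ and recognizing the resulting fractions as values of $\triangle^{i_1,i_2}$.

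First I would rewrite the right-hand side of \eqref{PascalC} by pairing the two $C_1$-terms with the two $C_0$-terms sharing the same arguments, obtaining
\[
-n_1\bigl(C^{i_1,i_2}_0(n_1{-}1,n_2,p)-C^{i_1,i_2}_1(n_1{-}1,n_2,p)\bigr)
-n_2\bigl(C^{i_1,i_2}_0(n_1,n_2{-}1,p)-C^{i_1,i_2}_1(n_1,n_2{-}1,p)\bigr).
\]
Dividing throughout by $n_1!\,n_2!$ and using the elementary simplifications $n_1/n_1!=1/(n_1{-}1)!$ and $n_2/n_2!=1/(n_2{-}1)!$, the first bracket becomes $-\triangle^{i_1,i_2}(n_1{-}1,n_2,p)$ and the second $-\triangle^{i_1,i_2}(n_1,n_2{-}1,p)$, while the left-hand side becomes $\triangle^{i_1,i_2}(n_1,n_2,p)$. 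This is exactly \eqref{PascalA}.

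For the equality \eqref{PascalA} this division is a genuine identity between integers: each $\triangle^{i_1,i_2}$ is by definition a difference of cardinalities, hence an integer, so no invertibility is needed. For the congruence \eqref{PascalAp} I would apply the same regrouping and division to the congruence \eqref{PascalBp}; here the point to check is that the divisor $n_1!\,n_2!$ is invertible modulo $p$, which holds because $n_1,n_2\in\{1,\dots,p{-}2\}$ and $p$ is prime, so none of the factors $1,2,\dots,n_1$ or $1,2,\dots,n_2$ is divisible by $p$. Multiplying \eqref{PascalBp} by the inverse of $n_1!\,n_2!$ modulo $p$ then yields \eqref{PascalAp}. The hypotheses $n_1,n_2\geqslant1$ guarantee that $\triangle^{i_1,i_2}(n_1{-}1,n_2,p)$ and $\triangle^{i_1,i_2}(n_1,n_2{-}1,p)$ are defined. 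There is essentially no hard step here: the only thing to be careful about is keeping the two regimes straight, since \eqref{PascalBp} holds unconditionally whereas \eqref{PascalC}---and hence the equality \eqref{PascalA}---requires the extra hypothesis $p\nmid n_1i_1+n_2i_2$.
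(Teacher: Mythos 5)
Your proposal is correct and is essentially the paper's own proof: the paper likewise obtains \eqref{PascalAp} and \eqref{PascalA} by dividing \eqref{PascalBp} and \eqref{PascalC} respectively by $n_1!\,n_2!$, noting that this factor is not a multiple of $p$. The only difference is that you spell out the regrouping $n_1/n_1!=1/(n_1{-}1)!$ explicitly, which the paper leaves implicit.
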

\begin{proof}
Division of both sides of \eqref{PascalBp} by $n_1!n_2!$ (which is not multiple of $p$) gives \eqref{PascalAp} and division by the same number of \eqref{PascalC} gives \eqref{PascalA}.
\end{proof}
Theorem \ref{Pascal0} follows  directly from Proposition \ref{prDeltaSym} and  Theorem   \ref{Pascal}.

\section{Some properties of finite Pascal's triangles.}
\label{secArray}

\subsection{Algorithm.}
Let us define formally $\triangle^{i_1,i_2}(n_1,n_2)=0$ when one of $n_1$, $n_2$ is negative or $n_1+n_2\geqslant p$.  
Then \eqref{PascalA} is  valid for any $n_1,n_2\in\N^2$ such that $p\nmid n_1i_1+n_2i_2$. Indeed: if $n_1=0$ or $n_2=0$, the identification $A^{i_1,i_2}(n_1,n_2,p)=A(\max(n_1,n_2),p)$ implies \eqref{PascalA} via Lemma \ref{delta1d}. When $n_1+n_2=p-1$, one can use the hypothesis $X_1\cup X_2=\Fx$ and the identity $\sum\Fx=0$ to prove
$$i_1\sum X_1+i_2\sum X_2=(i_1-i_2)\sum X_1, $$
which implies $A^{i_1,i_2}_i(n_1,n_2,p)=A^{i_1-i_2,i_2}_i(n_1,0,p)$, therefore $\triangle^{i_1,i_2}(n_1,n_2)=(-1)^{n_1}$. The equation \eqref{PascalA} is valid, therefore, when $n_1+n_2=p$.

We can now prove that the functional relation \eqref{PascalA}, together with these border values,  characterizes the function $\triangle^{i_1,i_2}(\cdot,\cdot,p)$ as a function defined on $\Z_{\geqslant-1}^2$, with values in $\Z$.

\begin{theorem}
\label{AlgoTriangle}
Let $p$ be an odd prime, let $i_1,i_2$ be two distinct elements of $\{1,\dots,p-1\}$, and let $d:\Z_{\geqslant-1}^2\to\Z$ be a function such that 
\begin{align}
&d(0,0)=1, \label{trig1} \\
&d(n_1,n_2)=0 \text{ if } n_1{=}-1\text{ or }n_2{=}-1\text{ or }n_1+n_2\geqslant p,\label{trig2}\\
&d(n_1,n_2)+d(n_1-1,n_2)+d(n_1,n_2-1)=0 \text{ if } p\nmid n_1i_1+n_2i_2.\label{trig3}
\end{align}
Then, $d(n_1,n_2)=\triangle^{i_1,i_2}(n_1,n_2,p)$.
\end{theorem}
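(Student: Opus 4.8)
The plan is to prove uniqueness. Since $\triangle^{i_1,i_2}(\cdot,\cdot,p)$ itself satisfies \eqref{trig1}, \eqref{trig2} and \eqref{trig3} (the value $\triangle^{i_1,i_2}(0,0,p)=1$ is immediate from Definition \ref{A}; the vanishing \eqref{trig2} is the formal convention adopted at the beginning of this subsection; and \eqref{trig3} is exactly \eqref{PascalA} extended to all relevant $(n_1,n_2)$ as explained just before the statement), it suffices to show that the only function $e:\Z_{\geqslant-1}^2\to\Z$ satisfying the homogeneous conditions
\begin{align*}
&e(0,0)=0,\\
&e(n_1,n_2)=0\ \text{ if }n_1=-1\text{ or }n_2=-1\text{ or }n_1+n_2\geqslant p,\\
&e(n_1,n_2)+e(n_1-1,n_2)+e(n_1,n_2-1)=0\ \text{ if }p\nmid n_1i_1+n_2i_2
\end{align*}
is $e\equiv0$; then $e:=d-\triangle^{i_1,i_2}(\cdot,\cdot,p)$ is forced to vanish, which is the claim.

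First I would record two elementary facts. Along each axis the recurrence applies at every cell: a cell $(0,k)$ or $(k,0)$ with $1\leqslant k\leqslant p-1$ satisfies $p\nmid ki_2$ (resp. $p\nmid ki_1$), so \eqref{trig3} gives $e(0,k)=-e(0,k-1)$ and $e(k,0)=-e(k-1,0)$; starting from $e(0,0)=0$ this propagates to $e(0,k)=e(k,0)=0$ for all $0\leqslant k\leqslant p-1$. Second, on a fixed anti-diagonal $n_1+n_2=m$ the congruence $n_1i_1+n_2i_2\equiv0$ reads $n_1(i_1-i_2)\equiv-mi_2$, which has a unique solution $n_1\bmod p$; hence each anti-diagonal carries at most one \emph{bad} cell (a cell at which \eqref{trig3} is unavailable), and for $1\leqslant m\leqslant p-1$ this cell, when it lies in the triangle, has $n_1\not\equiv0$ and $n_1\not\equiv m$, i.e. it is interior to the diagonal and never on an axis.

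The core of the argument is a backward induction on the anti-diagonals $n=p-1,p-2,\dots,0$, exploiting the outer region where $e$ already vanishes. Assume $e\equiv0$ on all diagonals of index $>n$. For a cell $(n_1,n_2)$ with $n_1+n_2=n$ I would apply \eqref{trig3} at the cell $(n_1+1,n_2)$ of the next diagonal \emph{provided it is good}: since $e(n_1+1,n_2)=0$ by the inductive hypothesis and $e(n_1+1,n_2-1)$ lies again on diagonal $n$, this yields the intra-diagonal relation $e(n_1,n_2)=-e(n_1+1,n_2-1)$. These relations link the cells of diagonal $n$ into a single chain $e(0,n)\leftrightarrow e(1,n-1)\leftrightarrow\dots\leftrightarrow e(n,0)$, in which at most one link is missing, namely the one indexed (if it exists) by the bad cell of diagonal $n+1$. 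The endpoints $e(0,n)$ and $e(n,0)$ vanish by the axis computation above. If no link is missing, the whole chain is pinned to these zeros; if the bad cell of diagonal $n+1$ breaks the chain, the interiority noted above guarantees that each of the two resulting sub-chains still contains one of the two axis endpoints, so both are forced to vanish. In every case $e\equiv0$ on diagonal $n$, completing the induction and hence the proof.

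The step I expect to require the most care is the bookkeeping of the bad cells in the backward induction, precisely because the recurrence \eqref{trig3} is \emph{not} available at a bad cell: one must check that the single possibly-missing link never disconnects a sub-chain from an axis, and that the boundary diagonal $n+1=p$ (whose two bad cells sit at $n_1=0$ and $n_1=p$) produces no interior break, so that diagonal $p-1$ is recovered as one unbroken chain. The fact that $\triangle^{i_1,i_2}(\cdot,\cdot,p)$ genuinely fulfils \eqref{trig3} on the diagonals $n_1+n_2\in\{p-1,p\}$, verified in the paragraph preceding the statement, is exactly what legitimises using the outer zero region in this backward induction.
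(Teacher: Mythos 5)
Your proof is correct and follows essentially the same route as the paper's: subtract $\triangle^{i_1,i_2}$ to reduce to the homogeneous problem, kill the axes by iterating \eqref{trig3} from $\delta(0,0)=0$, and then run a backward induction over anti-diagonals in which the recurrence applied at the good cells of diagonal $n+1$ chains the cells of diagonal $n$ together, with the single possible break at the source separating two sub-chains each anchored at a vanishing axis endpoint. The only difference is cosmetic bookkeeping (your explicit check that the bad cell is never on an axis versus the paper's two directed sweeps toward $n_1^S$).
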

\begin{proof}
Define $\delta(n_1,n_2)=d(n_1,n_2)-\triangle^{i_1,i_2}(n_1,n_2,p)$. Then the function $\delta$ satisfies \eqref{trig2}, \eqref{trig3} and   $\delta(0,0)=0$. In order to prove the theorem we should prove that $\delta=0$.

By applying  \eqref{trig3} successively to $n_2=0$ and $n_1=1,\dots,p-1$ one proves that $\delta(0,0)=-\delta(1,0)=\delta(2,0)=\dots=\delta(p-1,0)$. By applying it to $n_1=0$ and $n_2=1,\dots,p-1$ one proves that $\delta(0,0)=-\delta(0,1)=\dots=\delta(0,p-1)$. 

Let us prove the identity $\delta(n_1,n_2)=0$ by induction on $\tilde n:=p-n_1-n_2\in\{0,\dots,p-2\}$. If $\tilde n=0$, then $\delta(n_1,n_2)=0$ as a part of the hypothesis \eqref{trig2}.

Suppose that the Theorem is proved for $\tilde n\in\{0,\dots,p-3\}$, let us prove it for  $\tilde n+1$. Denote $(n_1^S,n_2^S)$ the solution of
\begin{equation*}
\left\{\begin{array}{l}i_1n_1^S+i_2n_2^S\equiv0\text{ mod }p\\n_1^S+n_2^S=p-\tilde n\\(n_1^S,n_2^S)\in\{1,\dots,p\}^2.\end{array}\right. 
\end{equation*} 
If one applies the functional relation \eqref{trig3} to a point where $n_2=p-\tilde n-n_1$ (with the restriction $n_1\ne n_1^S$), and uses the induction hypothesis, one gets
\begin{equation}
\label{P1}
\delta(n_1-1,p-\tilde n-n_1)+\delta(n_1,p-\tilde n-n_1-1)=0.
\end{equation}
By applying \eqref{P1} successively to $n_1=1,\dots,n_1^S-1$, we prove $\delta(n_1,p-\tilde n-n_1-1)=0$ for $n_1$ in the same range $1,\dots,n_1^S-1$. If $n_1^S\geqslant p-\tilde n-1$, this concludes the step of induction. Otherwise, by applying \eqref{P1} successively to $n_1=p-\tilde n-1,\dots,n_1^S+1$ (in the decreasing order of values of $n_1$), we prove $\delta(n_1,p-\tilde n-n_1-1)=0$ for $n_1$ in the range $p-\tilde n,\dots,n_1^S$. 

This concludes the induction and proves $\delta(n_1,n_2)=0$ for all $(n_1,n_2)$.
\end{proof}

The previous proof corresponds to the  Algorithm \ref{algoAttaqueFond}, which computes the values of the function $\triangle^{a,b}(x,y,p)$ line by line.
It executes one addition per number to compute, therefore its execution time  is proportional to the size of the answer.

Given an odd prime $p$ and two distinct elements $i_1,i_2$ of $\Fx$, we are going to call the array of all values of $\triangle^{i_1,i_2}(n_1,n_2,p)$ for $n_1,n_2\geqslant 0, n_1+n_2<p$ a \textit{finite Pascal's triangle}, and we will use geometrical terminology when it seems to make exposition simpler. 

We  are going to call \it sources \rm the points $(n_1,n_2)$ such that $p|i_1n_1+i_2n_2$. Define
\begin{equation}
\label{DefForce}
f^{i_1,i_2}(n_1,n_2,p)=\triangle^{i_1,i_2}(n_1,n_2,p)+\triangle^{i_1,i_2}(n_1-1,n_2,p)+\triangle^{i_1,i_2}(n_1,n_2-1,p).
\end{equation}
The value of $f^{i_1,i_2}(n_1,n_2,p)$ (which we will call \it force\rm) is nonzero only at sources, where it can be computed using \eqref{Celem2} combined with the end of the proof of Theorem \ref{eqPC}:
\begin{multline}
\label{SourcesElem}
n_1!n_2!f^{i_1,i_2}(n_1,n_2,p)=\sum_{
\begin{array}{c}
X_1,X_2,\dots,X_l\\
\text{partition of }\{1,\dots,n_1+n_2\},\\
\forall j\ p\,\mid\,\sum_{m\in X_j}f_m
\end{array}}\\
(|X_1|-1)!(|X_2|-1)!\dots(|X_l|-1)!(-1)^{n_1+n_2-l}
p^l(n_1+n_2-|X_1|-\dots-|X_l|)!.
\end{multline}
This formula uses the notation \eqref{NotationCoeffs} in order to describe the fact that summation goes through all partitions of $\{1,\dots,n_1+n_2\}$ into hindrances.
 
The definition \eqref{DefForce} implies, by linearity of the Pascal's equation:
\begin{equation}
\label{DeltasSumOfSources}
\triangle^{i_1,i_2}(n_1,n_2,p)=\sum_{\begin{array}{c}0{\leqslant} k{\leqslant} n_1\\ 0{\leqslant} l{\leqslant} n_2 \\ p\,\mid\,i_1k{+}i_2l\end{array}} f^{i_1,i_2}(k,l,p)(-1)^{n_1+n_2-k-l}\genfrac{(}{)}{0pt}{}{n_1{+}n_2{-}k{-}l}{n_1-k}.
\end{equation}

\begin{algorithm}
\caption{Calculate a finite Pascal's triangle. Arguments $p,a,b$: $p$ prime, $0<a<b<p$}
\label{algoAttaqueFond}
\begin{algorithmic}
\State Allocate \textit{the integer array} data[\(0..p-1\)][\(0..p-1\)] (values of $\triangle^{a,b}(x,y,p)$),
\State  \textit{the boolean array} reg[\(0..p{-}1\)][\(0..p{-}1\)] (information about sources)
\For{$x=0,..,p-1,y=0,..,p-1$} 
	\State{reg\([x][y]= (a\cdot x+b\cdot y\not\equiv0\mbox{ mod }p)\)}
\EndFor
\State data\([0][0]=\)data\([p-1][0]=\)data\([0][p-1]=1\)
\State \quad\underline{\textit{resolution at the edges}}
\LINEFOR{$x=1,\dots,p{-}2$}{data\([x][p{-}1{-}x]=-\)data\([x{-}1][p{-}x]\)}
\LINEFOR{$x=1,\dots,p{-}2$}{data\([x][0]=-\)data\([x{-}1][0]\)}
\LINEFOR{$y=1,\dots,p{-}2$}{data\([0][y]=-\)data\([0][y{-}1]\)}
\State \quad\underline{\textit{resolution inside}}
\For{$n=p-2,..,1$}
	\For{$x=1,..,n-1$}
		\State \(y\gets n-x \)
		\If{\(\textrm{reg}[x][y+1]\)} 
			\State $\textrm{data}[x][y]=-\textrm{data}[x-1][y+1]-\textrm{data}[x][y+1]$
		\Else
			\State Stop the inner loop
		\EndIf
	\EndFor
	\For{$y=1,..,n-1$}
		\State \(x\gets n-y \)
		\If{\(\textrm{reg}[x+1][y]\)} 
			\State $\textrm{data}[x][y]=-\textrm{data}[x+1][y-1]-\textrm{data}[x+1][y]$
		\Else
			\State Stop the inner loop
		\EndIf
	\EndFor
\EndFor
\State \quad\underline{\textit{Print the result}}
\For{$n=0,..,p-1$}
	\For{$y=0,..,n$}
		\State Print data$[n-y][y]$, reg$[n-y][y]$
	\EndFor
	\State Print newline
\EndFor	
\end{algorithmic}
\end{algorithm}

\subsection{The case $i_1=1,i_2=2$.}
\label{SsecTriangle12}
We can find a closed formula for the numbers $\triangle^{1,2}(n_1,n_2,p)$ using the identity 
\begin{equation}\label{ConversionP12} \triangle^{1,2}(n_1,n_2,p)=\triangle^{1,2}(n_1,p-1-n_1-n_2,p).
\end{equation}
 It follows indeed from the fact that for each disjoint couple $X_1,X_2\subset\Fx$, as in the definition \eqref{defA},
\[\sum X_1+2\sum X_2=-\left(\sum X_1+2\sum(\Fx\setminus X_1\setminus X_2)\right).\]
Formula \eqref{deltaSimples} applies to at least one side of \eqref{ConversionP12} for each $(n_1,n_2)$ (and to both sides of \eqref{ConversionP12} if $n_1+2n_2=p-1$), leading to
\begin{equation}
\label{TwoHalvesTriangle}
\triangle^{1,2}(n_1,n_2,p)=\left\{\begin{array}{cl}(-1)^{n_1+n_2}\genfrac{(}{)}{0pt}{}{n_1+n_2}{n_1}&\text{ if }n_1+2n_2\leqslant p-1\\
(-1)^{n_2}\genfrac{(}{)}{0pt}{}{p-1-n_2}{n_1}&\text{ if }n_1+2n_2\geqslant p-1.\end{array}\right.
\end{equation}
Therefore, this Pascal's triangle is symmetric with respect to the axis $n_1+2n_2=p-1$.

\begin{figure}
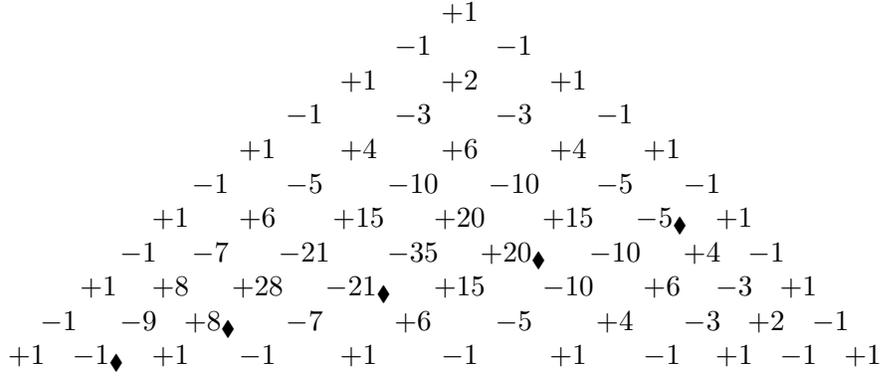

$$\begin{array}{@{\!}c@{\!}c@{\!}c@{\!}c@{\!}c@{\!}c@{\!}c@{\!}c@{\!}c@{\!}c@{\!}c@{\!}c@{\!}c@{\!}c@{\!}c@{\!}c@{\!}c@{\!}c@{\!}c@{\!}c@{\!}c@{\!}}&&&&&&&&&&+1\\&&&&&&&&&-1&&-1\\&&&&&&&&+1&&+2&&+1\\&&&&&&&-1&&-3&&-3&&-1\\&&&&&&+1&&+4&&+6&&+4&&+1\\
&&&&&-1&&-5&&-10&&-10&&-5&&-1\\
&&&&+1&&+6&&+15&&+20&&+15&&-5_\Kv&&+1\\
&&&-1&&-7&&-21&&-35&&+20_\Kv&&-10&&+4&&-1\\
&&+1&&+8&&+28&&-21_\Kv&&+15&&-10&&+6&&-3&&+1\\
&-1&&-9&&+8_\Kv&&-7&&+6&&-5&&+4&&-3&&+2&&-1\\
+1&&-1_\Kv&&+1&&-1&&+1&&-1&&+1&&-1&&+1&&-1&&+1
\end{array}
$$
\caption{Coefficients of $\prod_{j=1}^{10}\left(X+\zeta_{11}^{j}Y+\zeta_{11}^{2j}Z\right)$}
\label{triangle12}
\end{figure}

One can deduce \eqref{NormLucas} from \eqref{TwoHalvesTriangle} in the following way: by \eqref{normPolyn2},
\begin{multline}
\label{NormSources12}
\prod_{j=1}^{j=p-1}\left(1+\zeta_p-\zeta_p^2\right)=\sum_{n_1,n_2\in\N}(-1)^{n_2}\triangle^{1,2}(n_1,n_2,p)\\
=\sum_{n_1,n_2\in\N}(-1)^{n_2-1}(\triangle^{1,2}(n_1{-}1,n_2,p)+\triangle^{1,2}(n_1,n_2{-}1,p)-f^{1,2}(n_1,n_2,p))\\
=\sum_{n_1,n_2\in\N}((-1)^{n_2-1}\triangle^{1,2}(n_1,n_2-1,p) -(-1)^{n_2}\triangle^{1,2}(n_1-1,n_2,p)\\ + (-1)^{n_2}f^{1,2}(n_1,n_2,p))\\
=\sum_{n_1,n_2\in\N}(-1)^{n_2}f^{1,2}(n_1,n_2,p)
\end{multline}
because massive cancellation occurs in the sum of differences of values of the function $(-1)^y\triangle^{1,2}(x,y,p)$.

Suppose $n_1,n_2>0$ and $n_1+2n_2=p$ (therefore $n_1$ is odd). Then
\begin{multline}
\label{source12}
f^{1,2}(n_1,n_2,p)=\triangle^{1,2}(n_1-1,n_2,p)+\triangle^{1,2}(n_1,n_2-1,p)+\triangle^{1,2}(n_1,n_2,p)\\
=(-1)^{n_2}\genfrac{(}{)}{0pt}{}{n_1+n_2-1}{n_1-1}+2(-1)^{n_2}\genfrac{(}{)}{0pt}{}{n_1+n_2-1}{n_1}\\
=(-1)^{n_2}\left(\genfrac{(}{)}{0pt}{}{n_1+n_2}{n_1}+\genfrac{(}{)}{0pt}{}{n_1+n_2-1}{n_1} \right)\\
=(-1)^{n_2}\left(\genfrac{(}{)}{0pt}{}{p-n_2}{n_2}+\genfrac{(}{)}{0pt}{}{p-n_2-1}{n_2-1} \right).
\end{multline}

The absolute value of \eqref{source12} can be interpreted as the number of ways to put $n_2$ identical disjoint dominoes  on a discrete circle of length $p$. Indeed (see also \cite{Lucas}), for any $k\leqslant \frac{p-1}{2}$
\begin{multline}
\#\{k\text{ disjoint dominoes on a circle of length }p\}\\
=\#\{k\text{ disjoint dominoes on a line segment of length }p\}\\+\#\{k-1\text{ disjoint dominoes on a line segment of length }p-2\}\\
=\genfrac{(}{)}{0pt}{}{p-k}{k}+\genfrac{(}{)}{0pt}{}{p-k-1}{k-1}.
\end{multline}
The sum \eqref{NormSources12} contains three terms not covered by the hypotheses of \eqref{source12}: these correspond to $n_1{=}n_2{=}0$, $n_1{=}p,n_2{=}0$, $n_1{=}0,n_2{=}p$ and they equal respectively $1$, $1$ and $-1$. The overall contribution of these terms  can be identified to the number of ways to put $0$ dominoes on a discrete circle of length $p$. Therefore, the norm \eqref{NormSources12} equals to the number of ways to put any number of identical disjoint dominoes  on a discrete circle of length $p$, which is proved in \cite{Lucas} to be $L_p$.

For example, if $p=11$, the numbers are those of Figure \ref{triangle12} ($\Kv$ denotes a source).

\subsection{Application: an identity for binomial coefficients.}
The formulas \eqref{normPolyn2} and \eqref{NormSources12} have another application. As $1-\zeta_p+\zeta_p^2=\frac{1+\zeta_p^3}{1+\zeta_p}$, we get in a similar way to \eqref{NormSources12}:
\begin{multline}
\label{NormPMP}
1=\prod_{j=1}^{j=p-1}\left(1-\zeta_p+\zeta_p^2\right)=\sum_{n_1,n_2\in\N}(-1)^{n_1}\triangle^{1,2}(n_1,n_2,p)\\=\sum_{n_1,n_2\in\N}(-1)^{n_1}f^{1,2}(n_1,n_2,p).
\end{multline}
We further get:
\begin{multline}
\label{NormPMP2}
1=1+\sum_{n_1,n_2\in\N^*}(-1)^{n_1}f^{1,2}(n_1,n_2,p)=1-\sum_{n_1,n_2\in\N^*}f^{1,2}(n_1,n_2,p).
\end{multline}
The formula \eqref{source12} leads to the following combinatorial identity\footnote{The previous proof implies \eqref{formuleBinom} in the case of prime $p{\geqslant}5$. The Zeilberger's algorithm (implemented in Maple 17, see also Chapter $6$ of the book \cite{AegalB}) generalizes it for any $p{\geqslant}5$ congruent to $1$ or $5$ modulo $6$}:
\begin{equation}
\label{formuleBinom}
\sum_{k=1}^{\frac{p-1}{2}}(-1)^k\left(\genfrac{(}{)}{0pt}{}{p-k}{k}+\genfrac{(}{)}{0pt}{}{p-k-1}{k-1}\right)=0.
\end{equation}

\subsection{Second application: expression for a symmetric polynomial.}
We can formulate an expression for an arbitrary symmetric polynomial of the numbers $(1+\zeta_p^j-\zeta_p^{2j})$ which is:
\begin{theorem}
\label{PSymetriquePPM}
Let $p\geqslant5$ be prime and $\delta\in\{0,\dots,p-2\}$ an integer. Then $\sigma_{p-1-\delta,(j=1,\dots,p-1)}(1+\zeta_p^j-\zeta_p^{2j})$ (see the notation of Proposition \ref{prDeltaSym}) equals $\genfrac{(}{)}{0pt}{}{p-1}{\delta}$ plus the sum of ``weights" of ways of putting a number $n{>}0$ of disjoint dominoes on a discrete circle of length $p$, the weights being $\genfrac{(}{)}{0pt}{}{n-1}{\delta}$.

As a consequence, $\sigma_{p-1-\delta}(1+\zeta-\zeta^2)>0$ and $\sigma_{p-1-\delta}(1+\zeta-\zeta^2)\equiv\genfrac{(}{)}{0pt}{}{p-1}{\delta}\text{ \rm mod }p.$
\end{theorem}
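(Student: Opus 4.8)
The plan is to specialize the symmetric-polynomial expansion \eqref{DeltaSym} of Proposition \ref{prDeltaSym} to $i_1=1$, $i_2=2$ evaluated at $Y_0=1,\,Y_1=1,\,Y_2=-1$, and then to replace each coefficient $\triangle^{1,2}(n_1,n_2,p)$ by its expression \eqref{DeltasSumOfSources} in terms of the forces $f^{1,2}$ supported on sources. This mirrors the computation of $L_p$ in \eqref{NormSources12} (the case $\delta=0$), the only novelty being the weight $\binom{n_0}{\delta}$ produced by the $\delta$-th symmetric function. First I would write
\[
\sigma_{p-1-\delta,(j=1,\dots,p-1)}(1+\zeta_p^j-\zeta_p^{2j})=\sum_{\substack{n_0+n_1+n_2=p-1\\ n_0\geqslant\delta}}\binom{n_0}{\delta}(-1)^{n_2}\triangle^{1,2}(n_1,n_2,p),
\]
substitute \eqref{DeltasSumOfSources}, and exchange the order of summation so that the outer sum runs over sources $(k,l)$ (points with $p\mid k+2l$) carrying force $f^{1,2}(k,l,p)$, the inner sum running over triples with $n_1\geqslant k$, $n_2\geqslant l$, $n_0\geqslant\delta$ and $n_0+n_1+n_2=p-1$.

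The key step is the collapse of this inner sum. Combining the sign $(-1)^{n_2}$ from the symmetric function with the sign $(-1)^{n_1+n_2-k-l}$ from \eqref{DeltasSumOfSources} gives $(-1)^{n_1-k-l}$; setting $m=n_1-k$ and $m'=n_2-l$ and writing $N=p-1-k-l$, the inner sum becomes
\[
(-1)^l\sum_{n_0=\delta}^{N}\binom{n_0}{\delta}\sum_{m+m'=N-n_0}(-1)^m\binom{m+m'}{m}.
\]
The innermost alternating sum equals $(1-1)^{N-n_0}$, which vanishes unless $n_0=N$; hence only the term $n_0=N$ survives and the whole inner sum reduces to $(-1)^l\binom{p-1-k-l}{\delta}$.

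It then remains to enumerate the sources lying in the triangle. Since $0\leqslant k,l$ and $k+l\leqslant p-1$ force $0\leqslant k+2l\leqslant 2p-2$, the only multiples of $p$ available are $k+2l=0$, giving $(0,0)$ with force $1$, and $k+2l=p$, giving the family $(k,l)=(p-2l,l)$ for $1\leqslant l\leqslant\tfrac{p-1}{2}$. The source $(0,0)$ contributes $\binom{p-1}{\delta}$. For the family one has $p-1-k-l=l-1$, and by \eqref{source12} the force is $(-1)^l\bigl(\binom{p-l}{l}+\binom{p-l-1}{l-1}\bigr)$; the two factors $(-1)^l$ cancel, leaving the domino count $\binom{p-l}{l}+\binom{p-l-1}{l-1}$ times the weight $\binom{l-1}{\delta}$. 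Writing $n=l$, this is exactly the asserted formula: $\binom{p-1}{\delta}$ plus the sum over $n>0$ of $\binom{n-1}{\delta}$ times the number of placements of $n$ disjoint dominoes on a circle of length $p$.

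The two consequences follow quickly. Positivity is immediate: every summand is non-negative and $\binom{p-1}{\delta}>0$ for $\delta\leqslant p-2$. For the congruence it suffices to show each domino count vanishes modulo $p$; using $\binom{p-a}{b}\equiv(-1)^b\binom{a+b-1}{b}\pmod p$ one gets $\binom{p-l}{l}\equiv(-1)^l\binom{2l-1}{l}$ and $\binom{p-l-1}{l-1}\equiv(-1)^{l-1}\binom{2l-1}{l-1}$, and these cancel by the symmetry $\binom{2l-1}{l}=\binom{2l-1}{l-1}$, so the entire correction term is $\equiv 0\pmod p$ and $\sigma_{p-1-\delta}\equiv\binom{p-1}{\delta}$. (Alternatively, divisibility of each domino count by $p$ follows from the free rotation action of $\Z/p\Z$ on nonempty configurations.) I expect the main obstacle to be the careful sign- and index-bookkeeping in the exchange of summations and the verification that no sources besides $(0,0)$ and the family $(p-2l,l)$ occur.
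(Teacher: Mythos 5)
Your proof is correct and follows essentially the same route as the paper's: both specialize Proposition \ref{prDeltaSym} at $(Y_0,Y_1,Y_2)=(1,1,-1)$, reduce the resulting sum over the $\triangle^{1,2}(n_1,n_2,p)$ to a sum over the sources $(0,0)$ and $(p-2l,l)$ weighted by $(-1)^{n_2}\binom{p-1-n_1-n_2}{\delta}f^{1,2}(n_1,n_2,p)$, and conclude with \eqref{source12} and the domino interpretation. The only immaterial difference is the mechanism of that reduction --- the paper telescopes the Pascal recurrence in one step, while you substitute the closed form \eqref{DeltasSumOfSources} and collapse the inner sum via $\sum_{m+m'=M}(-1)^m\binom{M}{m}=0^{M}$ --- and your explicit verifications of the positivity and of the congruence (which the paper leaves implicit) are both sound.
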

\begin{proof}
By Proposition \ref{prDeltaSym}, we get a similar expression to \eqref{NormSources12}
\begin{multline}
\label{symPPM0}
\sigma_{p-1-\delta,(j=1,\dots,p-1)}\left(1+\zeta_p^{j}+\zeta_p^{2j}\right)
\\=\sum_{n_1,n_2\in\N}(-1)^{n_2}\genfrac{(}{)}{0pt}{}{p-1-n_1-n_2}{\delta}\triangle^{1,2}(n_1,n_2,p)
\\=\sum_{\tilde n=1}^p\sum_{\begin{array}{c}n_1,n_2\\n_1+n_2=p-\tilde n\end{array}}(-1)^{n_2}\genfrac{(}{)}{0pt}{}{\tilde n-1}{\delta}\\(-\triangle^{1,2}(n_1-1,n_2,p)-\triangle^{1,2}(n_1,n_2-1,p)+f^{1,2}(n_1,n_2,p))
\\=\sum_{\tilde n=1}^p\sum_{\begin{array}{c}n_1,n_2\\n_1+n_2=p-\tilde n\end{array}}(-1)^{n_2}\genfrac{(}{)}{0pt}{}{\tilde n-1}{\delta}f^{1,2}(n_1,n_2,p).
\end{multline}
The identity \eqref{source12} leads to
\begin{multline}
\label{symPPMDominos}
\sigma_{p-1-\delta,(j=1,\dots,p-1)}\left(1+\zeta_p^{j}+\zeta_p^{2j}\right)
\\=\genfrac{(}{)}{0pt}{}{p-1}{\delta}+\sum_{n_2=1}^{\frac{p-1}{2}}\genfrac{(}{)}{0pt}{}{n_2-1}{\delta}\left(\genfrac{(}{)}{0pt}{}{p-n_2}{n_2}+\genfrac{(}{)}{0pt}{}{p-n_2-1}{n_2-1}\right)
\end{multline}
and the discussion that follows the formula \eqref{source12} identifies each number $(-1)^{n_2}f^{1,2}(n_1,n_2,p)$ as the number of ways to put $n_2$ disjoint dominoes on a discrete circle of length $p$.  
\end{proof}
\vfill\eject

\subsection{The case $i_1=1,i_2=3$.}
\label{subsecPascal13}
\begin{figure}
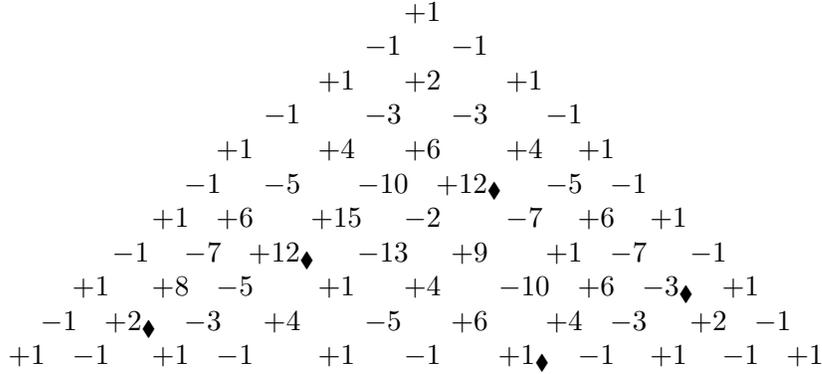

$$\begin{array}{@{\!}c@{\!}c@{\!}c@{\!}c@{\!}c@{\!}c@{\!}c@{\!}c@{\!}c@{\!}c@{\!}c@{\!}c@{\!}c@{\!}c@{\!}c@{\!}c@{\!}c@{\!}c@{\!}c@{\!}c@{\!}c@{\!}}
&&&&&&&&&&+1\\&&&&&&&&&-1&&-1\\&&&&&&&&+1&&+2&&+1\\&&&&&&&-1&&-3&&-3&&-1\\&&&&&&+1&&+4&&+6&&+4&&+1\\
&&&&&-1&&-5&&-10&&+12_\Kv&&-5&&-1\\
&&&&+1&&+6&&+15&&-2&&-7&&+6&&+1\\
&&&-1&&-7&&+12_\Kv&&-13&&+9&&+1&&-7&&-1\\
&&+1&&+8&&-5&&+1&&+4&&-10&&+6&&-3_\Kv&&+1\\
&-1&&+2_\Kv&&-3&&+4&&-5&&+6&&+4&&-3&&+2&&-1\\
+1&&-1&&+1&&-1&&+1&&-1&&+1_\Kv&&-1&&+1&&-1&&+1\\
\end{array}
$$
\caption{Coefficients of $\prod_{j=1}^{10}\left(X+\zeta_{11}^{j}Y+\zeta_{11}^{3j}Z\right)$}
\label{triangle13}
\end{figure}
In this case the formula
\begin{equation}
\label{Equivalence13}
\triangle^{1,3}(n_1,n_2,p)=\triangle^{2,3}(n_1,p-1-n_1-n_2,p)  
\end{equation}
is analogous to \eqref{ConversionP12} and implies
\begin{equation}
\label{BinomialZones13}
\triangle^{1,3}(n_1,n_2,p)=\left\{\begin{array}{@{}c@{}l@{}l}(-1)^{n_1+n_2}\genfrac{(}{)}{0pt}{}{n_1+n_2}{n_1}&\text{ if }&n_1+3n_2\leqslant p-1\\
(-1)^{n_2}\genfrac{(}{)}{0pt}{}{p-1-n_2}{n_1}&\text{ if }&n_1+3n_2\geqslant 2p-2\text{ or }\\&&n_1+3n_2=2p-4,\end{array}\right.
\end{equation} 
therefore, in two regions, the coefficients of the triangle are identical to the previous case. 

The coefficients in the middle region can be calculated using the general formula \eqref{DeltasSumOfSources}. Let us specify different quantities used there, namely the position of sources and the associated forces. The sources are the integer points situated on two lines: the \it upper \rm line with equation $n_1+3n_2=p$ and the \it lower \rm line with equation $n_1+3n_2=2p$. One can see that the number of integer points on the upper line of sources is
\begin{equation}
\label{UpperLine}
\#\left\{\begin{array}{c}0<n_1<p\\0<n_2<p\\n_1+3n_2=p \end{array}\right\} =\left\lfloor\frac{p}{3}\right\rfloor
\end{equation}
and the number of integer points on the lower line is
\begin{equation}
\label{LowerLine}
\#\left\{\begin{array}{c}0<n_1<p\\0<n_2<p\\n_1+3n_2=2p \end{array}\right\} =\textrm{rnd}(\frac{p}{6}),
\end{equation}
the closest integer to $\frac{p}{6}$.

If $(n_1,n_2)$ is a point on the upper line of sources, the value of \\$f^{1,3}(n_1,n_2,p)$ has a simple expression given by \eqref{SourcesElem}:
\begin{equation}
\label{SourceHaut13}
f^{1,3}(n_1,n_2,p)=\frac{(n_1+n_2-1)!p}{n_1!n_2!}
\end{equation}
because the sum consists of the single term associated to \\${X{=}\{1,\dots,n_1{+}n_2\}}$. Under the same hypotheses, \eqref{DeltasSumOfSources} implies
\begin{equation}
\label{DroiteHaut13}
\triangle^{1,3}(n_1,n_2,p)=\frac{(n_1+n_2-1)!p}{n_1!n_2!}-\genfrac{(}{)}{0pt}{}{n_1+n_2}{n_1}=2\genfrac{(}{)}{0pt}{}{n_1{+}n_2{-}1}{n_1}.
\end{equation}

In any point $(n_1,n_2)$ such that $p\leqslant n_1+3n_2<2p$, the formula \eqref{DeltasSumOfSources} takes the following form:
\begin{multline}
\label{DeltasMilieu}
\triangle^{1,3}(n_1,n_2,p)=(-1)^{n_1+n_2}\genfrac{(}{)}{0pt}{}{n_1+n_2}{n_1}\\+\sum_{\begin{array}{c}0{<} k{\leqslant} n_1\\ 0{<}l{\leqslant} n_2 \\ p=i_1k{+}i_2l\end{array}} f^{1,3}(k,l,p)(-1)^{n_1+n_2-k-l}\genfrac{(}{)}{0pt}{}{n_1{+}n_2{-}k{-}l}{n_1-k}.
\end{multline}

We can also compute a simple expression for the forces of sources on the lower line. Suppose that $n_1,n_2>0$ and $n_1+3n_2=2p$. Then, by \eqref{DefForce} and \eqref{Equivalence13},
\begin{multline}
\label{ConversionForcesBas}
f^{1,3}(n_1,n_2,p)=\triangle^{1,3}(n_1,n_2,p)+\triangle^{1,3}(n_1-1,n_2,p)+\triangle^{1,3}(n_1,n_2-1,p)\\
=f^{2,3}(n_1,p-n_1-n_2,p).
\end{multline}
By \eqref{SourcesElem} (the sum, once again, consists of a single term because \\$2n_1+3(p{-}n_1{-}n_2){=}p$),
\begin{equation}
\label{ForcesBas}
f^{2,3}(n_1,p-n_1-n_2,p)=\frac{(-1)^{n_2}(p-n_2-1)!p}{n_1!(p-n_1-n_2)!}.
\end{equation}  

For example, if $p=11$, the numbers are those of Figure \ref{triangle13} ($\Kv$ denotes a source).

\end{document}